\documentclass[12pt,twoside]{article}

\usepackage{graphicx,graphics, amsthm, amsfonts, amsbsy,amssymb, amsmath, cite, array,arydshln, hyperref, float,tikz}
\usepackage[utf8]{inputenc}

\let\OLDthebibliography\thebibliography
\renewcommand\thebibliography[1]{
	\OLDthebibliography{#1}
	\setlength{\parskip}{3pt}
	\setlength{\itemsep}{0pt plus 0.3ex}
}

\usepackage[small,compact]{titlesec} 
\everymath{\displaystyle}

\usepackage[margin=1in]{geometry}

\allowdisplaybreaks

\usepackage{enumitem}
\newtheorem{theorem}{Theorem}[section]
\newtheorem{remark}[theorem]{Remark}
\newtheorem{example}[theorem]{Example}
\newtheorem{problem}{Problem}
\newtheorem{lemma}[theorem]{Lemma}

\newtheorem{corollary}[theorem]{Corollary}

\newtheorem{proposition}[theorem]{Proposition}

\renewenvironment{proof}{{\noindent\bfseries Proof.}}{\qed}

\title{Explicit Formulas and Unimodality Phenomena for General Position Polynomials}
\author{
	{ Bilal Ahmad Rather$^{a}$} \\[2mm]
	\small $^{a}$School of Mathematics and Statistics, Shandong University of Technology, Zibo 255049, China\\
	$^{a}$\texttt{bilalahmadrr@gmail.com}
}
\date{}

\begin{document}
	\pagestyle{myheadings} \markboth{Bilal Ahmad Rather}{Explicit Formulas and Unimodality Phenomena for General Position Polynomials}
\maketitle

	\begin{abstract}
		The general position problem in graphs seeks the largest set of vertices such that no three vertices lie on a common geodesic. Its counting refinement, the general position polynomial $\psi(G)$, asks for all such possible sets. In this paper, We describe general position sets for several classes of graphs and provide explicit formulas for the general position polynomials of complete multipartite graphs. We specialize to balanced complete multipartite graphs and show that for part size $r\le 4$, the polynomial $\psi(K_{r,\dots,r})$ is log-concave and unimodal for all numbers of parts, while for larger $r$, counterexamples show that these properties fail. Finally, we analyze the corona $G\circ K_1$ and prove that unimodality of $\psi(G)$ is retained for some classes, and counterexamples exists for complete bipartite and complete multipartite graphs. The results verify the analogy between general position polynomials and classical position-type parameters, and establish balanced multipartite graphs and coronas as potential subjects for further investigation.
	\end{abstract}

	\vskip 3mm
	
	\noindent{\footnotesize Keywords: General position set, general position number, general position polynomial, unimodality, log-concave}
	
	\vskip 3mm
	\noindent {\footnotesize AMS subject classification: 05C31, 05C69, 05C76.}
	\noindent {\footnotesize ACM classification: F.2.2}
\section{Introduction and motivation}

All graphs considered are finite, simple and undirected. For a graph $G$, we denote its vertex set by $V(G)$ and its order by $|V(G)|$. The distance between vertices $u,v\in V(G)$ is denoted by $d_G(u,v)$ or simply $d(u,v)$. Two adjacent vertices are represented by $u\sim v.$
The \emph{general position problem} was introduced independently by Chandran and Parthasarathy \cite{7} and by Manuel and Klav\v{z}ar \cite{20}. For a connected graph $G$, a vertex set $S\subseteq V(G)$ is in \emph{general position} if no vertex of $S$ lies on a shortest path between two other vertices of $S$, that is, there are no distinct $x,y,z\in S$ with \[
d(x,z)=d(x,y)+d(y,z).
\]
The \emph{general position number} $\operatorname{gp}(G)$ is the maximum
cardinality of a general position set. This parameter has been investigated on
many graph classes and products, including cographs and bipartite graphs
\cite{4}, Cartesian products \cite{17,18,23}, various interconnection networks
\cite{21}, Kneser graphs \cite{10,22}, and graphs with small diameter or other
structural constraints \cite{20,21,26}. Algorithmic and game-theoretic aspects
have been studied in \cite{15,16}, while related position-type notions such as
mutual visibility and monophonic position sets have been considered in
\cite{9,24,25}. 

A finite sequence of nonnegative real numbers \((\beta_0,\dots,\beta_d)\) is \emph{unimodal} if there exists an index \(m\) such that
\[
\beta_0\le \beta_1\le \dots\le \beta_m\ge \beta_{m+1}\ge \dots\ge \beta_d,
\]
and it is \emph{log-concave} if
\[
\beta_k^2 \ge \beta_{k-1}\beta_{k+1}
\]
for all \(1\le k\le d-1\). A log-concave sequence with no internal zeros (no zero between two non-zero entries) is automatically unimodal, see \cite{1,8,11,13,14} for discussions in related settings.

As with many extremal graph parameters, it is natural to refine
$\operatorname{gp}(G)$ by counting all general position sets. This leads to
the \emph{general position polynomial} (shortly GPA) \cite{0}. For a graph $G$, let $\alpha_k(G)$ denote
the number of general position sets of size $k$. Then, the GPA is defined as
\[
\psi(G)=\sum_{k\ge 0} \alpha_k(G) x^k.
\]
It is the ordinary generating polynomial of general position sets. 
The first systematic study of $\psi(G)$ was carried out in \cite{0}. The closed formulas were obtained for $\psi(G)$ on a range of graph
families  like paths, cycles, complete graphs, complete bipartite graphs, thin and
thick grids, combs, Kneser graphs $K(n,2)$, and for several graph operations
(disjoint union, join, Cartesian product). It was also shown that the general
position polynomial is not unimodal in general. The explicit non-unimodal
examples arise already among trees (such as suitable brooms) and among
complete bipartite graphs, for instance $K_{8,4}$ and $K_{9,7}$.

The study of the polynomial $\psi(G)$ fits into a broader line of research on graph polynomials whose coefficients encode combinatorial information. It behaviour similar like the classical polynomials, some examples include: the independence polynomial, whose coefficients count independent sets,  the matching polynomial, whose real-rootedness was proved in \cite{11},   the chromatic polynomial, whose coefficients are unimodal \cite{13},   the domination polynomial, introduced in \cite{3}, for which unimodality has been conjectured \cite{3} and partially verified in	\cite{2,5,6,19}, and the clique and independent set polynomials studied in \cite{12}. The coefficient sequences of these polynomials are often conjectured or proved to be \emph{unimodal} or \emph{log-concave}. In particular, the independence polynomial of a claw-free graph is real-rooted and hence unimodal \cite{8}, the
matching polynomial is real-rooted \cite{11}, and the chromatic polynomial has
unimodal coefficients \cite{13}. On the other hand, independence polynomials
are not unimodal in general, although it was conjectured in \cite{1} that they
are unimodal on trees. Recent work shows that even for trees, log-concavity may fail \cite{14}.

In contrast to these classical polynomials, the GPA
$\psi(G)$ encodes a genuinely geometric constraint—avoidance of geodesics.
Ir\v{s}i\v{c} et al.\ \cite{0} showed that $\psi(G)$ displays a rich mixture of
behaviours: for certain classes (paths, cycles, combs, Kneser graphs $K(n,2)$)
it is unimodal or even has additional structure, while for others (brooms,
some complete bipartite graphs) it fails to be unimodal.

\medskip
\noindent\textbf{Motivations and scope of this work.}
With the motivation of above classical polynomials, studying the unimodality and log-concavity of $\psi(G)$ serves several purposes, and we have following reason to carry forward the study of polynomial $\psi(G)$.
\begin{enumerate}
	\item It clarifies the distribution of general position sets by size and
	reveals the shape of position-type configurations within a graph.
	\item It links geometric constraints on shortest paths with algebraic
	properties of generating functions.
	\item It connects and extends several lines of work on general position,
	visibility, and extremal position problems in graphs
	\cite{4,7,9,15,16,20,21,23,24,25,26}.
	\item It dovetails with long-standing themes and conjectures on unimodality
	and real-rootedness of graph polynomials
	\cite{1,2,3,5,6,8,11,12,13,14,19}.
\end{enumerate}

In this paper, we focus on two broad and natural directions. First, we study  complete multipartite graphs, which unify complete graphs, complete bipartite graphs, Turán graphs and their relatives. They may be viewed as joins of independent sets, and joins are among the basic graph operations for which general position numbers and polynomials have been investigated \cite{0,10,17,21,23,bilalsc,bilaltcs}. We provide a structural description of general position sets in complete multipartite graphs, leading directly to closed formulas for their GPAs. As consequences,  we recover the known formula for complete bipartite graphs and clarify how non-unimodality can already appear in this setting. Second, we specialize to  balanced complete multipartite graphs, that is, complete $a$-partite graphs with all parts of the same size $r$. For such graphs, we analyze the coefficient sequence of $\psi(K_{r,\dots,r})$ in detail. Our main contributions here are: for small part size $r\le 4$, we show that $\psi(K_{r,\dots,r})$ is 	log-concave and hence unimodal, for all numbers of parts $a$, and  for larger $r$ we construct explicit counterexamples (including 	balanced complete multipartite graphs) where the general position
polynomial fails to be unimodal and where log-concavity also breaks down. 
This identifies a natural threshold phenomenon for unimodality and log-concavity in balanced complete multipartite graphs.

Motivated by the role of graph operations in the general position problem
\cite{0,10,17,20,21,23}, we also examine the \emph{corona} $G\circ K_1$,
obtained from $G$ by attaching a pendent vertex to each vertex of $G$. We show
that unimodality of $\psi(G)$ is preserved for several basic families (paths,
edgeless graphs, combs), but the general question whether $\psi(G)$ unimodal
implies $\psi(G\circ K_1)$ unimodal remains open.

\medskip
 In Section~\ref{sec:prelims} we recall basic definitions and simple facts about distances and shortest paths in complete multipartite graphs. In Section~\ref{sec:polynomial} we derive a closed formula for the general
position polynomial of complete multipartite graphs and recover the complete
bipartite case as a special instance.  In Section~\ref{sec:balanced} we
specialize to balanced complete multipartite graphs, study the coefficient
sequence of $\psi(K_{r,\dots,r})$, and establish log-concavity and unimodality
when $r\le 4$, together with explicit counterexamples beyond this range.  In Section \ref{sec 5}, we disuss the log-concave and unimodal properties of the GPA of some graphs. In
Section~\ref{section 6},  we consider the corona operation $G\circ K_1$ and
its effect on unimodality of the GPA. We conclude in Section~\ref{section 7} with log-concave for the corona operation $G\circ K_1$. We end up article with some comments for future direction in Section \ref{section 8}.	
	\section{Preliminaries}\label{sec:prelims}
	
	We begin with the basic definitions needed in the sequel. Fro a graph \(G\), A subset \(S\subseteq V(G)\) is a \emph{general position set} if there do not exist distinct vertices \(x,y,z\in S\) such that
		\[
		d(x,z)=d(x,y)+d(y,z),
		\]
		that is, no vertex of \(S\) lies on a shortest path between two other vertices of \(S\). The \emph{general position number} of \(G\) is
		\[
		\operatorname{gp}(G)=\max\{|S|: S\subseteq V(G) \text{ is a general position set}\}.
		\]
		For each \(k\ge 0\), let \(\alpha_k(G)\) denote the number of general position sets of size \(k\). The \emph{general position polynomial} of \(G\) is
		\[
		\psi(G)=\sum_{k\ge 0} \alpha_k(G) x^k.
		\]
	We note that  \(\alpha_0(G)=1\), \(\alpha_1(G)=|V(G)|\), and every pair of vertices forms a general position set, so \(\alpha_2(G)=\binom{|V(G)|}{2}\). Thus, for a graph of order \(n\),
	\[
	\psi(G)=1+nx+\binom{n}{2}x^2+\cdots,
	\]
	and the degree of \(\psi(G)\) equals \(\operatorname{gp}(G)\).
	
	A graph \(G\) is \emph{complete multipartite} if its vertex set can be partitioned into independent sets 	$V_1,\dots,V_t$,  called the \emph{partite sets} or simply \emph{parts}, such that two vertices are adjacent if and only if they belong to different parts. If \(|V_i|=n_i\) for each \(i\), we denote such a graph by $K_{n_1,\dots,n_t}.$  Thus, complete graphs, complete bipartite graphs and Turán graphs are all special cases of complete multipartite graphs.
	
	The following basic observations will be used repeatedly.
	 \begin{lemma}\label{lem:basic-small}
		Let \(G\) be a connected graph of order \(n\).
		\begin{enumerate}
			\item Every subset of \(V(G)\) of size at most \(2\) is in general position. In particular,
			 $\alpha_0(G)=1, \alpha_1(G)=n,$ and $ \alpha_2(G)=\binom{n}{2}.$ 
			\item If \(H\) is an isometric induced subgraph of \(G\) and \(S\subseteq V(H)\), then \(S\) is a general position set of \(H\) if and only if it is a general position set of \(G\).
		\end{enumerate}
	\end{lemma}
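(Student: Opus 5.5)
Both parts follow directly from the definition, so I will argue each by unpacking what a violation of general position would require.

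For part 1, I will observe that the defining condition of a general position set refers to three distinct vertices $x,y,z\in S$ with $d(x,z)=d(x,y)+d(y,z)$. If $|S|\le 2$, no such triple exists, so $S$ is vacuously in general position. Hence every subset of size $0$, $1$ or $2$ is counted. This gives $\alpha_0(G)=1$ (the empty set), $\alpha_1(G)=n$ and $\alpha_2(G)=\binom{n}{2}$. Connectedness is used only to ensure that all distances are finite, so that the condition is well defined.

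For part 2, the key input is the definition of isometric subgraph: $d_H(u,v)=d_G(u,v)$ for all $u,v\in V(H)$. Fix $S\subseteq V(H)$. A triple of distinct vertices $x,y,z\in S$ violates general position in $H$ exactly when $d_H(x,z)=d_H(x,y)+d_H(y,z)$. Since all three vertices lie in $V(H)$, each $d_H$ term equals the corresponding $d_G$ term. So this equation holds if and only if $d_G(x,z)=d_G(x,y)+d_G(y,z)$, which is exactly the condition for the triple to violate general position in $G$.

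Thus the set of bad triples in $S$ is the same whether computed in $H$ or in $G$, and $S$ is in general position in $H$ if and only if it is in general position in $G$. The induced hypothesis is not even needed, since only distances matter.

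There is no real obstacle here. The only point requiring care is to formulate general position purely through the distance equation rather than through shortest paths, since a shortest path in $G$ might leave $H$. The distance formulation makes this irrelevant: a vertex $y$ lies on some shortest $x,z$-path in $G$ exactly when the distance equality holds, and isometry transfers that equality to $H$.
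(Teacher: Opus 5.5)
Your proof is correct and follows the paper's argument essentially verbatim: part (1) holds vacuously because the defining condition requires three distinct vertices, and part (2) transfers the equation $d(x,z)=d(x,y)+d(y,z)$ between $H$ and $G$ using $d_H=d_G$ on $V(H)$. Your side remarks are also accurate: the induced hypothesis is redundant, since an isometric subgraph is automatically induced, and the distance formulation is the right way to avoid worrying about shortest paths that leave $H$.
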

	\begin{proof}
		Part (1) is immediate, since the definition involves three distinct vertices.
		
		For (2), let \(x,y,z\in S\) be distinct. Since \(H\) is isometric in \(G\), so
		\[
		d_H(x,y)=d_G(x,y),\quad d_H(y,z)=d_G(y,z),\quad d_H(x,z)=d_G(x,z).
		\]
		Therefore, we have
		\[
		d_H(x,z)=d_H(x,y)+d_H(y,z)
		\iff
		d_G(x,z)=d_G(x,y)+d_G(y,z).
		\]
		Hence no vertex of \(S\) lies on a shortest path between two others in \(H\) if and only if the same holds in \(G\).
	\end{proof}
	
	The following result records the distances in complete multipartite graphs.
	\begin{lemma}\label{lem:distances}
		Let \(G=K_{n_1,\dots,n_t}\) with \(t\ge 2\).
		\begin{enumerate}
			\item If \(u,v\) lie in different parts, then \(d(u,v)=1\).
			\item If \(u\neq v\) lie in the same part, then \(d(u,v)=2\).
		\end{enumerate}
	\end{lemma}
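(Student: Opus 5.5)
The proof is a direct distance computation that uses only the definition of a complete multipartite graph. I would treat the two items separately.

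For item (1), let \(u\in V_i\) and \(v\in V_j\) with \(i\neq j\). By definition of \(K_{n_1,\dots,n_t}\), vertices in different parts are adjacent, so \(u\sim v\). Since \(u\neq v\), we get \(d(u,v)=1\).

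For item (2), let \(u\neq v\) both lie in \(V_i\). Because \(V_i\) is an independent set, \(u\not\sim v\), so \(d(u,v)\ge 2\). For the upper bound I use the hypothesis \(t\ge 2\): there is a part \(V_j\) with \(j\neq i\), and it is nonempty, since each \(n_j\ge 1\) by the convention that parts are nonempty. Choose any \(w\in V_j\). By item (1), \(w\sim u\) and \(w\sim v\), so \(u\,w\,v\) is a path of length \(2\). Hence \(d(u,v)\le 2\), and therefore \(d(u,v)=2\).

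This argument also shows that \(G\) is connected with diameter at most \(2\), which is implicitly needed for the distances to be finite. There is no real obstacle here. The only point requiring care is the role of \(t\ge 2\) together with the nonemptiness of the parts, which guarantees the common neighbour \(w\) in item (2). Without it, \(G\) would be edgeless and the distance between two vertices of the same part would be undefined.
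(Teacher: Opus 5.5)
Your proof is correct and follows the paper's argument: adjacency across parts gives \(d(u,v)=1\), and for two distinct vertices in the same part, non-adjacency plus a common neighbour \(w\) in another part (which exists since \(t\ge 2\)) gives \(d(u,v)=2\). You also spell out two points the paper leaves implicit, namely the lower bound \(d(u,v)\ge 2\) and the nonemptiness of the other part.
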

	
	\begin{proof}
		If \(u,v\) lie in different parts, they are adjacent by definition, so \(d(u,v)=1\). If \(u,v\) lie in the same part, they are non-adjacent, However, since \(t\ge 2\) there exists a vertex \(w\) in a different part, and then \(u\sim w\sim v\), so \(d(u,v)\le 2\). Hence, we obtain \(d(u,v)=2\).
	\end{proof}
	
\section{General position polynomials of complete multipartite graphs}\label{sec:polynomial}
	
	In this section, we characterize general position sets in complete multipartite graphs. The description is remarkably simple: a general position set is either contained in a single part or contains at most one vertex from each part.
	
	\begin{theorem}\label{thm:structure-gpsets}
		Let \(G=K_{n_1,\dots,n_t}\) with \(t\ge 2\), and let \(V_1,\dots,V_t\) be its partite sets. A subset \(S\subseteq V(G)\) is a general position set if and only if one of the following holds:
		\begin{enumerate}
			\item[(A)] \(S\) is contained in a single part, that is, \(S\subseteq V_i\) for some \(i\in[t]\), or
			\item[(B)] \(S\) contains at most one vertex from each part, that is,  \(|S\cap V_i|\le 1\) for all \(i\in[t]\).
		\end{enumerate}
	\end{theorem}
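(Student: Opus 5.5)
The proof rests entirely on Lemma~\ref{lem:distances}: all distances in \(G\) are \(1\) or \(2\), with distance \(2\) exactly between distinct vertices of the same part. For distinct \(x,y,z\in S\), the equality \(d(x,z)=d(x,y)+d(y,z)\) can therefore hold only if \(d(x,z)=2\) and \(d(x,y)=d(y,z)=1\). In words, \(x,z\) lie in a common part and \(y\) lies in a different part. So \(S\) fails to be in general position if and only if it contains two vertices from one part together with a third vertex from another part. I will state and prove this reformulation first, and then obtain both directions of the theorem from it.

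\emph{Sufficiency.} Suppose (A) holds. Then any three distinct vertices of \(S\) lie in one part, so all pairwise distances equal \(2\), and \(2=2+2\) is impossible. Suppose (B) holds. Then all pairwise distances equal \(1\), and \(1=1+1\) is impossible. In either case no triple violates the condition, so \(S\) is a general position set. Sets of size at most \(2\) satisfy both conditions trivially, which is consistent with Lemma~\ref{lem:basic-small}(1).

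\emph{Necessity.} Assume \(S\) satisfies neither (A) nor (B). Failing (B) gives an index \(i\) with \(|S\cap V_i|\ge 2\); pick distinct \(x,z\in S\cap V_i\). Failing (A) means \(S\not\subseteq V_i\), so there is some \(y\in S\setminus V_i\). Then \(d(x,z)=2=d(x,y)+d(y,z)\), so \(y\) lies on a geodesic between \(x\) and \(z\), and \(S\) is not in general position. This is the contrapositive of the required implication.

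There is no serious obstacle here. The only point needing care is the necessity direction: the failures of (A) and (B) must be combined correctly. Failing (B) alone only yields a crowded part, and it is failing (A) that supplies the outside vertex \(y\) lying in a different part.
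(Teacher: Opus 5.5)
Your proof is correct and follows essentially the same route as the paper. Sufficiency checks that triples inside one part (all distances $2$) and triples meeting distinct parts (all distances $1$) cannot satisfy the geodesic equality. Necessity combines two vertices of $S$ in one part (failure of (B)) with a vertex of $S$ outside that part (failure of (A)) to produce a violating triple. The only cosmetic difference is that you argue purely through the distance arithmetic of Lemma~\ref{lem:distances}, whereas the paper describes the shortest paths $u\sim w\sim v$ explicitly.
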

	
	\begin{proof}
		First suppose \(S\) satisfies (A) or (B). If \(S\subseteq V_i\), let \(u,v\in S\) with \(u\neq v\). By Lemma~\ref{lem:distances} (2), we have \(d(u,v)=2\), and any shortest \(u\text{-}v\) path has the form \(u\sim w\sim v\), where \(w\) lies in some \(V_j\) with \(j\neq i\). Since, all vertices of \(S\) lie in \(V_i\), no vertex of \(S\) can be an internal vertex of such a path. Thus, no vertex of \(S\) lies on a shortest path between two others, and \(S\) is in general position.
		
		If (B) holds, then any two vertices \(u,v\in S\) lie in different parts, so by Lemma~\ref{lem:distances} (1), we obtain \(d(u,v)=1\). A shortest \(u\text{-}v\) path is of length \(1\), and contains no internal vertex. In particular no third vertex of \(S\) lies on such a path. Hence, \(S\) is in general position.
		
		Conversely, assume that \(S\) is a general position set, and does not satisfy (A), that is, it meets at least two parts. Suppose that (B) fails, then there exists a part, say \(V_i\), such that \(|S\cap V_i|\ge 2\), and another part, say \(V_j\) with \(j\neq i\), such that \(S\cap V_j\neq\emptyset\). Select unique vertices \(u,v\in S\cap V_i\), and a vertex \(w\in S\cap V_j\). According to Lemma~\ref{lem:distances} (2), \(d(u,v)=2\), and by Lemma~\ref{lem:distances} (1), \(d(u,w)=d(v,w)=1\). Consequently, it implies that that $d(u,v) = 2 = d(u,w) + d(w,v).$ Thus, \(w\) is situated on a minimal \(u\text{-}v\) path. This contradicts the assumption that \(S\) constitutes a set in general position. Hence, if \(S\) is a general position set not contained in a single part, it must satisfy (B).
	\end{proof}
	
	As an immediate consequence, we obtain the general position number.
	
	\begin{corollary}\label{cor:gp-number-multipartite}
		Let \(G=K_{n_1,\dots,n_t}\) with \(t\ge 2\), and let $N=\sum_{i=1}^t n_i,$ and $M=\max_{1\le i\le t} n_i.$ 	Then
		$\operatorname{gp}(G)=\max\{M,t\}.$ 
	\end{corollary}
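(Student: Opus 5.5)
The corollary follows from Theorem~\ref{thm:structure-gpsets} once we show two inequalities: every general position set has size at most $\max\{M,t\}$, and some general position set attains this size.

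For the upper bound, let $S$ be any general position set of $G$. By Theorem~\ref{thm:structure-gpsets}, $S$ satisfies (A) or (B). If (A) holds, then $S\subseteq V_i$ for some $i$, so $|S|\le n_i\le M$. If (B) holds, then $|S|=\sum_{i=1}^t |S\cap V_i|\le t$. In either case $|S|\le \max\{M,t\}$.

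For the lower bound, we exhibit two sets.
\begin{itemize}
\item A largest part $V_i$ with $n_i=M$ satisfies (A), so it is a general position set of size $M$.
\item Since each $n_i\ge 1$, we may choose one vertex from each part. The resulting transversal satisfies (B), so it is a general position set of size $t$.
\end{itemize}
Hence $\operatorname{gp}(G)\ge \max\{M,t\}$.

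Combining the two bounds gives equality. The only subtle point is the implicit assumption that every part is nonempty, which is needed for the transversal to have exactly $t$ vertices; this is part of the definition of $K_{n_1,\dots,n_t}$ with each $n_i\ge 1$. Note that $N$ plays no role here.
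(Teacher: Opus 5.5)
Your proposal is correct and follows essentially the same route as the paper: it uses Theorem~\ref{thm:structure-gpsets} to get $|S|\le M$ under (A) and $|S|\le t$ under (B), and then exhibits a largest part and a transversal as general position sets of sizes $M$ and $t$. Your remark that every part must be nonempty for the transversal to have size exactly $t$ is a small point that the paper's argument also relies on without stating it.
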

	
	\begin{proof}
		By Theorem~\ref{thm:structure-gpsets}, a general position set is either contained in a single part, in this case its size is at most \(M\), or uses at most one vertex from each of the \(t\) parts, in this case its size is at most \(t\). Any largest part \(V_i\) provides a general position set of size \(M\), and any choice of one vertex from each part yields a general position set of size \(t\). Hence, we have \(\operatorname{gp}(G)=\max\{M,t\}\).
	\end{proof}
	
	\begin{remark}
		For complete graphs \(K_n\) we may regard the graph as \(K_{1,\dots,1}\) with \(t=n\). Then \(M=1\) and \(t=n\), so Corollary~\ref{cor:gp-number-multipartite} yields \(\operatorname{gp}(K_n)=n\), as expected. For complete bipartite graphs \(K_{m,n}\) we have \(t=2\) and \(M=\max\{m,n\}\), so \(\operatorname{gp}(K_{m,n})=\max\{m,n\}\), recovering the known result \cite{10,20}.
	\end{remark}

	We now derive a closed formula for the GPA of \(K_{n_1,\dots,n_t}\). Let $G = K_{n_1,\dots,n_t},$ with $V_1,\dots,V_t $  as before $|V_i|=n_i,$ for  $1\leq i\leq t$. Let \(N = \sum_{i=1}^t n_i\) be the order of \(G\).  By Theorem~\ref{thm:structure-gpsets}, every general position set is of one of the following two types: (A) sets entirely contained in a single part, and (B) sets that use at most one vertex from each part. We count the number of general position sets of each type and size. For \(k\ge 0\), let \(\alpha_k(G)\) denote the number of general position sets of size \(k\). A general position set of type (B) of size \(k\) is obtained by choosing a subset \(I\subseteq [t]\) of parts with \(|I|=k\) or choosing one vertex from each \(V_i\) with \(i\in I\). Hence, for \(k\ge 0\), we have
	\begin{equation}\label{eq:typeB}
		\alpha_k^{(B)}
		=\sum_{\substack{I\subseteq [t]\\ |I|=k}} \prod_{i\in I} n_i.
	\end{equation}
	The right-hand side is the elementary symmetric polynomial
	\[
	e_k(n_1,\dots,n_t)
	= \sum_{\substack{I\subseteq [t]\\ |I|=k}} \prod_{i\in I} n_i,
	\]
	with the convention that \(e_0(n_1,\dots,n_t)=1\). A general position set of type (A) of size \(k\) is obtained by choosing a part \(V_i\) and then choosing a \(k\)-subset of \(V_i\). Thus, we have
	\begin{equation}\label{eq:typeA}
		\alpha_k^{(A)}
		=\sum_{i=1}^t \binom{n_i}{k}.
	\end{equation}
	Note that for \(k\ge 2\), the two types are disjoint: a set of size at least 2 contained in a single part uses two vertices from that part and hence cannot satisfy the at most one per part condition. For \(k=0\) and \(k=1\) there is overlap, but we can easily separate these cases.
	
	We can now state and the main formula related to  GPA of multipartite graph.
		\begin{theorem}\label{thm:psi-multipartite}
		Let \(G=K_{n_1,\dots,n_t}\) with \(t\ge 2\) and total order \(N=\sum_{i=1}^t n_i\). Let \(e_k(n_1,\dots,n_t)\) be the elementary symmetric polynomials in the variables \(n_1,\dots,n_t\). Then the GPA of \(G\) is
		\begin{equation}\label{eq:psi-multipartite}
			\psi(G)
			= 1 + N x + \sum_{k=2}^{d} \left( e_k(n_1,\dots,n_t) + \sum_{i=1}^t \binom{n_i}{k} \right)x^k,
		\end{equation}
		where $d = \operatorname{gp}(G) 
		= \max\Bigl\{ t,\ \max_{1\le i\le t} n_i \Bigr\}.$ Equivalently, $\alpha_0(G)=1, \alpha_1(G)=N,$ and, for \(k\ge 2\),
		\begin{equation}\label{eq:coeff-general}
			\alpha_k(G)
			= e_k(n_1,\dots,n_t) + \sum_{i=1}^t \binom{n_i}{k},
		\end{equation}
		with the understanding that \(e_k(n_1,\dots,n_t)=0\), if \(k>t\), and \(\binom{n_i}{k}=0\), if \(k>n_i\).
	\end{theorem}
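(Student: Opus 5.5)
The plan is to read off the coefficients directly from the dichotomy in Theorem~\ref{thm:structure-gpsets}. Every general position set of $G=K_{n_1,\dots,n_t}$ is of type (A) or (B). So $\alpha_k(G)$ equals the number of sets of size $k$ that are of type (A) or of type (B). Inclusion–exclusion then gives $\alpha_k(G)=\alpha_k^{(A)}+\alpha_k^{(B)}-\alpha_k^{(A\cap B)}$. The two counts $\alpha_k^{(A)}=\sum_i\binom{n_i}{k}$ and $\alpha_k^{(B)}=e_k(n_1,\dots,n_t)$ are already recorded in \eqref{eq:typeA} and \eqref{eq:typeB}. For \eqref{eq:typeB} I would note briefly that a type (B) set of size $k$ is determined bijectively by its set of parts $I$ with $|I|=k$ together with one vertex from each $V_i$, $i\in I$; this gives $\prod_{i\in I}n_i$ sets for each $I$.

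The one genuine point is the overlap term. A set that is both of type (A) and type (B) lies in a single $V_i$ and meets it in at most one vertex, so it has size at most $1$. Hence $\alpha_k^{(A\cap B)}=0$ for $k\ge 2$, and $\alpha_k(G)=e_k+\sum_i\binom{n_i}{k}$ for all $k\ge2$. This is \eqref{eq:coeff-general}. The conventions $e_k=0$ for $k>t$ and $\binom{n_i}{k}=0$ for $k>n_i$ hold automatically, since the corresponding sums are empty or vanish. For $k=0,1$ I would not use the formula and instead appeal to Lemma~\ref{lem:basic-small}(1): $\alpha_0=1$ and $\alpha_1=N$. As a consistency check, the overlap there is $1$ and $N$ respectively, so that $e_0+t-1=1$ and $e_1+N-N=N$.

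Finally, the upper summation limit: $\alpha_k(G)=0$ exactly when $k>t$ and $k>n_i$ for every $i$, that is, when $k>\max\{t,\max_i n_i\}$. Moreover $\alpha_d>0$ at $d=\max\{t,M\}$. This matches Corollary~\ref{cor:gp-number-multipartite} and shows that the polynomial \eqref{eq:psi-multipartite} has degree exactly $d=\operatorname{gp}(G)$.

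I expect no real obstacle. The only place requiring care is the bookkeeping of the small-$k$ overlap, so that the displayed formula is asserted only for $k\ge2$.
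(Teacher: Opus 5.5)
Your proposal is correct and follows essentially the same route as the paper. You split general position sets into types (A) and (B) via Theorem~\ref{thm:structure-gpsets}, note that the types are disjoint for $k\ge 2$, add \eqref{eq:typeA} and \eqref{eq:typeB}, handle $k=0,1$ by Lemma~\ref{lem:basic-small}, and identify the degree with $\operatorname{gp}(G)$ via Corollary~\ref{cor:gp-number-multipartite}. One harmless slip is in your consistency check: at $k=0$ the count $\sum_i\binom{n_i}{0}=t$ counts the empty set once per part, so $e_0+t-1=t\neq 1$ (the overlap counted with multiplicity is $t$), but this does not affect the proof because you treat $k=0,1$ separately.
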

	
	\begin{proof}
		By Lemma~\ref{lem:basic-small}, \(\alpha_0(G)=1\) and \(\alpha_1(G)=N\). For each \(k\ge 2\), every general position set of size \(k\) is either of type (A) or type (B), and the two types are disjoint. Thus $\alpha_k(G)=\alpha_k^{(A)}+\alpha_k^{(B)}$ 	for \(k\ge 2\). Using \eqref{eq:typeB} and \eqref{eq:typeA} we have \eqref{eq:coeff-general} as
		\[
		\alpha_k(G)=e_k(n_1,\dots,n_t)+\sum_{i=1}^t \binom{n_i}{k},
		\]
		 The degree of \(\psi(G)\) equals \(\operatorname{gp}(G)\), which is given by Corollary~\ref{cor:gp-number-multipartite}, hence the sum in \eqref{eq:psi-multipartite} runs up to \(d=\operatorname{gp}(G)\).
	\end{proof}
	
	 We  illustrate Theorem~\ref{thm:psi-multipartite} on a few standard examples. A complete graph \(K_n\) is viewed as a complete multipartite graph with \(t=n\) parts of size $1$. Thus, for \(k\ge 2\), $\sum_{i=1}^n \binom{n_i}{k} = 0,$ while
		$e_k(1,\dots,1)=\binom{n}{k}.$ So, we obtain
		\[
		\psi(K_n)=\sum_{k=0}^n \binom{n}{k}x^k = (1+x)^n,
		\]
		and is in accordance with \cite[Proposition 2.1(i)]{0}.
	
	Let \(G=K_{m,n}\) with \(m\ge n\). Then \(t=2\), \(n_1=m\), \(n_2=n\). For \(k\ge 2\),
		\[
		e_k(m,n)
		=
		\begin{cases}
			mn; & k=2,\\
			0; & k\ge 3,
		\end{cases}
		\]
		and $\sum_{i=1}^2 \binom{n_i}{k}= \binom{m}{k}+\binom{n}{k}.$ Thus, we have $\alpha_0(G)=1, \alpha_1(G)=m+n,
		\alpha_2(G)=mn+\binom{m}{2}+\binom{n}{2}=\binom{m+n}{2},$ 
		and for \(k\ge 3\), $\alpha_k(G)=\binom{m}{k}+\binom{n}{k}.$ 
		Therefore, the GPA of $K_{m,n}$ is
		\[
		\psi(K_{m,n})
		= 1 + (m+n)x + \binom{m+n}{2}x^2
		+ \sum_{k=3}^{m}\left(\binom{m}{k}+\binom{n}{k}\right)x^k,
		\]
		which coincides with the expression for \(\psi(K_{m,n})\) previously obtained in \cite[Proposition 2.1(v)]{0}. 
	
	\begin{example}
		Let \(G=K_{r,\dots,r}\) be a complete multipartite graph including \(t\) partitions, each of equal size \(r\ge 1\).Then \(n_i=r\) for all \(i\in[t]\), and for each \(k\), we have $e_k(r,\dots,r)	= \binom{t}{k}r^k$, and  $\sum_{i=1}^t \binom{n_i}{k} = t\binom{r}{k}.$	Thus for \(k\ge 2\),
		\begin{equation}\label{eq:ak-balanced}
			\alpha_k(G)
			= \binom{t}{k}r^k + t\binom{r}{k},
		\end{equation}
		and
		\begin{equation}\label{eq:psi-balanced}
			\psi(K_{r,\dots,r})
			= 1 + trx + \sum_{k=2}^{\max\{r,t\}}
			\left(\binom{t}{k}r^k + t\binom{r}{k}\right)x^k.
		\end{equation}
		We will consider these graphs again in next Section~\ref{sec:balanced}.
	\end{example}
	
	\section{Balanced complete multipartite graphs: unimodality and log-concavity}
	\label{sec:balanced}
	
	In this section, we investigate the coefficient sequence of \(\psi(K_{r,\dots,r})\) when all parts have equal size \(r\).  For balanced complete multipartite graphs, we have the explicit formula \eqref{eq:ak-balanced} with $\alpha_0=1, \alpha_1=tr, $ and $\alpha_k = \binom{t}{k}r^k + t\binom{r}{k}$ for $k\ge 2.$ We first show that for small part size \(r\le 4\), the GPA of \(K_{r,\dots,r}\) is log-concave and therefore unimodal, independently of the number of parts. 

	\begin{theorem}\label{thm:balanced-small-r}
		Let $t\ge 2$ and let $K_{r,\dots,r}$ be a balanced complete $t$-partite graph
		with part size $r\in\{1,2,3,4\}$. Let $\psi(K_{r,\dots,r})=\sum_{k=0}^d \alpha_k x^k,$ with $d=\max\{r,t\},$ and suppose the coefficients are given by $\alpha_0=1,\alpha_1=tr, $ and $\alpha_k = \binom{t}{k}r^k + t\binom{r}{k}$ for $k\ge 2.$ Then the sequence $(\alpha_k)_{k=0}^d$ is log-concave and hence unimodal.
	\end{theorem}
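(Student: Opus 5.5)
The plan is a direct case analysis. The balanced formula \eqref{eq:ak-balanced} splits $\alpha_k$ into a "transversal'' part and a "single-part'' part. I would write
\[
b_k=\binom{t}{k}r^k,\qquad c_k=t\binom{r}{k},
\]
so that $\alpha_0=b_0$, $\alpha_1=b_1$, and $\alpha_k=b_k+c_k$ for $k\ge 2$. Here $c_k=0$ for $k>r$ and $b_k=0$ for $k>t$.

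The first step is to record that $(b_k)$ is log-concave. Indeed, for $1\le k\le t-1$,
\[
\frac{b_k^2}{b_{k-1}b_{k+1}}=\frac{(k+1)(t-k+1)}{k(t-k)}>1,
\]
so the surplus is explicit:
\[
b_k^2-b_{k-1}b_{k+1}=b_{k-1}b_{k+1}\,\frac{t+1}{k(t-k)}.
\]
The case $r=1$ is trivial. There $c_k=0$ for $k\ge2$, so $\psi=(1+x)^t$, which is log-concave; this also agrees with the computation for $K_n$ above. Since all $\alpha_k>0$ for $0\le k\le d$, there are no internal zeros. Log-concavity will therefore give unimodality by the remark in the introduction.

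For $r\in\{2,3,4\}$ I would split the indices $k$ into three ranges.

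First, for $k\ge r+2$ all three of $\alpha_{k-1},\alpha_k,\alpha_{k+1}$ equal the corresponding $b$'s. The inequality then follows from the log-concavity of $(b_k)$.

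Second, at $k=r+1$ (when $t\ge r+2$) the only perturbation is $c_r=t$ in $\alpha_r$. So I need
\[
b_{r+1}^2-b_rb_{r+2}\ge t\,b_{r+2}.
\]
By the surplus formula this reduces to
\[
\binom{t}{r}r^r(t+1)\ge t(r+1)(t-r-1).
\]
The left side has degree $r+1\ge 3$ in $t$ and a large leading constant, so this holds for all relevant $t$. For example, when $r=2$ it becomes $2t(t-1)(t+1)\ge 3t(t-2)$.

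Third, there are the finitely many indices $1\le k\le r$ where $c$'s appear. For each fixed $r\in\{2,3,4\}$ and each such $k$, the inequality $\alpha_k^2\ge\alpha_{k-1}\alpha_{k+1}$ becomes an explicit polynomial inequality $P_{r,k}(t)\ge 0$. I would expand each one and show nonnegativity for $t\ge2$. To do this I would write $P_{r,k}$ in the shifted variable $s=t-t_0$ so that all coefficients are nonnegative, and check the few values $t<t_0$ by hand. The boundary index $k=1$ is special: it reads
\[
t^2r^2\ge \binom{t}{2}r^2+t\binom{r}{2},
\]
and is checked separately because $\alpha_1$ carries no $c$-term. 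The dominant mechanism in the general case is that $b_k^2$ has degree $2k$ in $t$, while the cross terms $c_kb_k$ and $c_{k\pm1}b_{k\mp1}$ have lower degree. So the inequality holds for large $t$ with room to spare, and only small $t$ need direct verification.

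I expect the main obstacle to be the regime of small $t$, especially $t<r$, where $d=r$ and $b_k=0$ for $k>t$. In that regime the tail of the sequence is $t\binom{r}{k}$, which is log-concave by itself. However, the junction index $k=t$, where $\alpha_t=r^t+t\binom{r}{t}$ meets $\alpha_{t+1}=t\binom{r}{t+1}$, must be handled directly. Because $r\le4$ forces $t\in\{2,3\}$ there, I would simply tabulate the few sequences ($K_{3,3}$, $K_{4,4}$, $K_{4,4,4}$, and neighbours) and verify them numerically. This same junction analysis is exactly where I expect the argument to break for larger $r$, consistent with the counterexamples promised in the paper.
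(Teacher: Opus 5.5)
Your proposal is correct and follows essentially the paper's route: treat each $r\in\{1,2,3,4\}$ separately, dispose of the indices $k\ge r+2$ by log-concavity of $\bigl(\binom{t}{k}r^k\bigr)_k$, and reduce the finitely many remaining indices to explicit polynomial inequalities in $t$ checked for all integers $t\ge 2$ (these checks already cover $K_{3,3}$, $K_{4,4}$, $K_{4,4,4}$). The one difference is that your surplus identity $b_k^2-b_{k-1}b_{k+1}=b_{k-1}b_{k+1}\frac{t+1}{k(t-k)}$ settles $k=r+1$ uniformly in $r$, whereas the paper expands a separate polynomial at that index for each $r$; your $r=2$ instance should read $2t(t-1)(t+1)\ge 3t(t-3)$ rather than $3t(t-2)$, a harmless slip.
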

	
	\begin{proof} For $r\in\{1,2,3,4\}$ and $t\ge 2$ we first note that
		$\alpha_k>0$ for $0\le k\le d$ (and $\alpha_k=0$ for $k>d$). Hence there are no internal
		zeros in $(\alpha_k)_{k=0}^d$, and it suffices to prove log-concavity \cite{stanley}.
		We consider each case of $r$ separately. We need to recall basic facts.  For each fixed $t$, the binomial sequence $\left (\binom{t}{k}\right )_{k\ge 0}$
		is known to be log-concave in $k$, equivalently,
		\[
		\binom{t}{k}^2  \ge  \binom{t}{k-1}\binom{t}{k+1},\quad\text{for all }k.
		\]
		Multiplying term wise by $r^{2k}$, we see that the sequence
		$\left (r^k\binom{t}{k}\right )_{k\ge 0}$ is also log-concave
		\[
		\bigl(r^k\binom{t}{k}\bigr)^2
		 \ge  r^{k-1}\binom{t}{k-1}\cdot r^{k+1}\binom{t}{k+1}.
		\]
		In particular, whenever $\alpha_{k-1},\alpha_k,\alpha_{k+1}$ are all of the form
		$r^j\binom{t}{j}$, the log-concavity inequalities hold trivially.
		For $r=1$,  $K_{1,\dots,1}\cong K_t$ is a complete graph and we have (\cite[Proposition~2.1(i)]{0}), we have $\psi(K_t) = (1+x)^t.$ Thus, $\alpha_k=\binom{t}{k}$, and the sequence $(\alpha_k)$ is strictly log-concave and unimodal.
		
		For $r=2$, and from \eqref{eq:ak-balanced} we obtain $\alpha_0=1, \alpha_1=2t,$ and, for $k\ge 2$, we have $\alpha_k = \binom{t}{k}2^k + t\binom{2}{k}.$ 	In particular, $\alpha_2 = 4\binom{t}{2} + t = 2t(t-1)+t = t(2t-1),$ while for $k\ge 3$, we simply have $\alpha_k = 2^k\binom{t}{k}$. For $k\ge 4$ the triple $(\alpha_{k-1},\alpha_k,\alpha_{k+1})$ lies entirely in the
		binomial regime, so log-concavity holds by the basic facts. Thus, it
		remains to check the indices $k=1,2,3$. For $k=1$, we have $\alpha_1^2 - \alpha_0\alpha_2 = (2t)^2 - t(2t-1) = t(2t+1)  \ge  0,$
		which holds for all $t\ge 1$. For $k=2$,  $\alpha_1=2t$, $\alpha_2=t(2t-1)$ and
		$\alpha_3 = 8\binom{t}{3} = \tfrac{4}{3}t(t-1)(t-2)$, we obtain
		\begin{align*}
			\alpha_2^2 - \alpha_1\alpha_3
			&= t^2(2t-1)^2 - (2t)\cdot \frac{4}{3}t(t-1)(t-2)= \frac{t^2}{3}\bigl(4t^2 + 12t - 13\bigr).
		\end{align*}
		For \( t \ge 2 \), it follows that \( 4t^2 + 12t - 13 \ge 4t^2 + 12t - 16 = 4(t+4)(t-1) \ge 0 \), and the inequality \( \alpha_2^2 \ge \alpha_1\alpha_3 \) is satisfied.
		
		For $k=3$,  $\alpha_2=t(2t-1)$, $\alpha_3=\tfrac{4}{3}t(t-1)(t-2)$ and $\alpha_4 = 16\binom{t}{4} = \tfrac{2}{3}t(t-1)(t-2)(t-3)$. So, we have
		\begin{align*}
			\alpha_3^2 - \alpha_2\alpha_4
			&= \frac{16}{9}t^2(t-1)^2(t-2)^2
			- t(2t-1)\cdot \frac{2}{3}t(t-1)(t-2)(t-3)\\[1mm]
			&= \frac{2t^2(t-1)(t-2)}{9}\Bigl(8(t-1)(t-2) - 3(2t-1)(t-3)\Bigr)\\
			&= \frac{2t^2(t-1)(t-2)}{9}\bigl(2t^2 - 3t + 7\bigr).
		\end{align*}
		The quadratic \(2t^2 - 3t + 7\) possesses a negative discriminant (\(\Delta = 9 - 56 < 0\)) and a positive leading coefficient; therefore, it is positive for all real \(t\).  Since, also $t^2(t-1)(t-2)\ge 0,$ for $t\ge 2$, so we obtain
		$\alpha_3^2\ge \alpha_2\alpha_4$. Thus the sequence $(\alpha_k)$ is log-concave for $r=2$.
		 For $r=3$, from \eqref{eq:ak-balanced}, $\alpha_0=1, \alpha_1=3t,$ and, for $k\ge 2$, $\alpha_k = \binom{t}{k}3^k + t\binom{3}{k}. $ In particular, $\alpha_2 = 9\binom{t}{2} + 3t= \frac{3}{2}t(3t-1), $
		$\alpha_3 = 27\binom{t}{3} + t,	\alpha_4 = 81\binom{t}{4},	\alpha_5 = 243\binom{t}{5},$ and so on.
		Hence,  $\alpha_k = 3^k\binom{t}{k}$ for all $k\ge 4$. For $k\ge 5$ the triple $(\alpha_{k-1},\alpha_k,\alpha_{k+1})$ lies entirely in the binomial
		regime and is log-concave by the preliminary fact. Thus, we only need to check $k=1,2,3,4$. For $k=1$, we have
		\[
		\alpha_1^2 - \alpha_0\alpha_2
		= (3t)^2 - \frac{3}{2}t(3t-1)
		= \frac{3t}{2}(3t+1)  \ge  0,
		\]
		which is true for all $t\ge 1$.
		
		For $k=2$,  we have
		\begin{align*}
			\alpha_2^2 - \alpha_1\alpha_3
			&= \Bigl(\frac{3}{2}t(3t-1)\Bigr)^2
			- (3t)\bigl(27\binom{t}{3}+t\bigr)= \frac{3t^2}{4}\bigl(9t^2+36t-37\bigr).
		\end{align*}
		For \( t \ge 2 \), we derive \( 9t^2 + 36t - 37 \ge 9t^2 + 36t - 45 = 9(t - 1)(t + 5) \ge 0 \), thus concluding that \( \alpha_2^2 \ge \alpha_1 \alpha_3 \).
		
		For $k=3$, we have
		\[
		\alpha_3^2 - \alpha_2\alpha_4
		= \frac{t^2}{16}\bigl(81t^4 -405t^3 +1197t^2 -1971t +1114\bigr)
		= \frac{t^2}{16}P_3(t).
		\]
		The quartic $P_3$ has positive leading coefficient, and 
		$P_3(2)=1600$, $P_3(3)=7198$, $P_3(4)=21184$, so $P_3(t)>0,$ for
		$t=2,3,4$. For $t\ge 5$, we split
		\[
		P_3(t) = 81t^4 -405t^3 + (1197t^2 -1971t +1114)
		\ge 81t^4 -405t^3.
		\]
		As the bracket is a quadratic with positive leading coefficient, and the
		minimum is attained at $t<2$. Hence, it is increasing on $[2,\infty)$, and already
		positive at $t=2$. Thus for $t\ge 5$,
		\[
		P_3(t) \ge 81t^4 -405t^3 = 81t^3(t-5)\ge 0.
		\]
		Combining these cases we have $P_3(t)>0$ for all $t\ge 2$, and it shows that 
		$\alpha_3^2\ge \alpha_2\alpha_4$.
		
		For $k=4$,  we have $\alpha_4 = 81\binom{t}{4},$ and $\alpha_5 = 243\binom{t}{5}.$  
		So, we have
		\[
		\alpha_4^2 - \alpha_3\alpha_5
		= \frac{81}{320} t^2(t-1)(t-2)(t-3) 
		\bigl(9t^3 -18t^2 -17t +50\bigr)
		=\frac{81}{320} t^2(t-1)(t-2)(t-3) Q_3(t).
		\]
		For $t\ge 3$ the factors $t^2(t-1)(t-2)(t-3)$ are nonnegative. For the cubic
		$Q_3$, we observe that, for $t\ge 3$,
		\[
		Q_3(t) \ge 9t^3 -18t^2 -17t
		= t(9t^2 -18t -17).
		\]
		The quadratic $9t^2 - 18t - 17$ possesses a vertex at $t = 1$, is growing for $t \ge 1$, and attains a value of $10$ at $t = 3$. Consequently, $Q_3(t)>0$ for all $t\ge 3$. For $t=2$, we verify that $\alpha_4=0$ and $\alpha_5=0$, so the inequality $\alpha_4^2 \ge \alpha_3 \alpha_5$ is trivially satisfied. Thus $(\alpha_k)$ is log-concave for $r=3$.
		
		For $r=4$, \eqref{eq:ak-balanced}, gives  $\alpha_0=1, \alpha_1=4t,$ and for $k\ge 2$,
		\[
		\alpha_k = \binom{t}{k}4^k + t\binom{4}{k}.
		\]
		In particular,
		$ \alpha_2 = 16\binom{t}{2} + 6t = 2t(4t-1), \alpha_3 = 64\binom{t}{3} + 4t,\alpha_4 = 256\binom{t}{4} + t,	\alpha_5 = 1024\binom{t}{5},\alpha_6 = 4096\binom{t}{6},$ and so on.
		Thus, we obtain  $\alpha_k = 4^k\binom{t}{k}$ for all $k\ge 5$. As before, for $k\ge 6$ the triple $(\alpha_{k-1},\alpha_k,\alpha_{k+1})$ lies entirely in the binomial regime, hence is log-concave. We therefore only need to check	$k=1,2,3,4,5$. For $k=1$, we have
		\[
		\alpha_1^2 - \alpha_0\alpha_2 = (4t)^2 - 2t(4t-1)=2t(4t+1)  \ge  0.
		\]
		For $k=2$, we have
		\begin{align*}
			\alpha_2^2 - \alpha_1\alpha_3
			&= \bigl(2t(4t-1)\bigr)^2 - (4t)\bigl(64\binom{t}{3}+4t\bigr) = \frac{4t^2}{3}\bigl(16t^2 + 72t - 73\bigr).
		\end{align*}
		For $t\ge 2$,
		\[
		16t^2 + 72t - 73  \ge  16t^2 + 72t - 80
		= 8(2t^2 + 9t - 10),
		\]
		and $2t^2 + 9t - 10 \ge 2\cdot 4 + 18 - 10 = 16>0$ for $t\ge 2$. Thus,
		$\alpha_2^2\ge \alpha_1\alpha_3$.
		
		For $k=3$, we have
		\[
		\alpha_3^2 - \alpha_2\alpha_4
		= \frac{2t^2}{9}\bigl(128t^4 - 672t^3 + 2240t^2 - 3972t + 2321\bigr)
		= \frac{2t^2}{9}P_4(t).
		\]
		The quartic polynomial \( P_4 \) possesses a positive leading coefficient, with \( P_4(2) = 9 \), \( P_4(3) = 2789 \), and \( P_4(4) = 12033 \). Thus, $P_4(t)>0$ for $t=2, 3, 4$. For $t \ge 5$, we partition
		\[
		P_4(t) = 128t^4 - 672t^3 + (2240t^2 - 3972t + 2321)
		\ge 128t^4 - 672t^3.
		\]
		The bracket represents a quadratic function in $t$ with a positive leading coefficient, indicating that the minimum occurs for $t<2$. Consequently, it is growing on the interval $[2,\infty)$ and is positive at $t=2$. Therefore, for \( t \geq 5 \),
		\[
		P_4(t) \ge 128t^4 - 672t^3 = 128t^3(t-5.25) > 0.
		\]
		Consequently, $P_4(t)>0$ for all integers $t\ge 2$, which indicates that $\alpha_3^2\ge \alpha_2\alpha_4$.
		
		For $k=4$, we have
		\[
		\alpha_4^2 - \alpha_3\alpha_5
		= \frac{t^2}{45}\Bigl(1024t^6 - 8192t^5 + 20992t^4 - 4160t^3
		- 70784t^2 + 116032t - 54867\Bigr)
		= \frac{t^2}{45}R_4(t).
		\]
		We express \( R_4(t) = 45 + W(t) \) and factor \( W(t) \) as follows: \[ W(t) = R_4(t) - 45 = 64(t-1)(t-2)(t-3)\bigl(16t^3 - 32t^2 - 40t + 143\bigr). \]
		For $t\ge 3$, the factor $(t-1)(t-2)(t-3)\ge 0$, and the cubic $16t^3 - 32t^2 - 40t + 143$ is increasing on $[3,\infty)$ (its derivative has positive leading coefficient and is already positive at $t=3$), and positive at $t=3$. Thus, 	$16t^3 - 32t^2 - 40t + 143>0$ for $t\ge 3$. For $t=2$, we have $W(2)=0$, so it gives
		$R_4(2)=45>0$. Hence, $R_4(t)>0$, for all integers $t\ge 2$, and thus we obtain	$\alpha_4^2\ge \alpha_3\alpha_5$.
		
		For $k=5$, $\alpha_5=1024\binom{t}{5}$ and $\alpha_6=4096\binom{t}{6}$, we have
		\begin{align*}
			\alpha_5^2 - \alpha_4\alpha_6&= \frac{256}{675} t^2(t-1)(t-2)(t-3)(t-4) 
		\bigl(32t^4 - 160t^3 + 160t^2 + 145t - 117\bigr)\\
		&= \frac{256}{675} t^2(t-1)(t-2)(t-3)(t-4) S_4(t).
		\end{align*}
		For the index $k=5$ to be relevant in the log-concavity condition we need
		$5\le d-1$, that is, $d\ge 6$. Since $r=4$, this forces $t\ge 6$, and thus
		all the factors $t^2(t-1)(t-2)(t-3)(t-4)$ are positive. The quartic $S_4$ has
		positive leading coefficient and one checks that
		$S_4(2)=975$, $S_4(3)=4608$, $S_4(4)=13425$, hence $S_4(t)>0$ for
		$t\ge 2$, and  in particular for $t\ge 6$. Therefore, we have $\alpha_5^2\ge \alpha_4\alpha_6$. Combining the four cases $r=1,2,3,4$, we conclude that for each $r\in\{1,2,3,4\}$ and every $t\ge 2$ the coefficient sequence $(\alpha_k)_{k=0}^d$	of $\psi(K_{r,\dots,r})$ is log-concave. Since all coefficients $\alpha_k$ with
		$0\le k\le d$ are nonzero, the sequence has no internal zeros, and therefore
		it is unimodal (see, \cite{stanley}).
	\end{proof}
	
	\begin{remark}
		The proof above uses only that the contribution \(t\binom{r}{k}\) is supported on a small initial segment of the index set when \(r\) is small, combined with the fact that the pure term \(\binom{t}{k}r^k\) is log-concave in \(k\). For larger \(r\), the support of the correction term overlaps significantly with that of \(\binom{t}{k}r^k\), and the simple argument used above is no longer sufficient. Indeed, counterexamples show that the conclusion fails in general.
	\end{remark}
	
	 In contrast to Theorem~\ref{thm:balanced-small-r}, the GPA of \(K_{r,\dots,r}\) is not always unimodal (or log-concave) when \(r\) is larger. We give explicit examples.
	
	Consider the balanced complete bipartite graph \(K_{8,8}\), which can be viewed as \(K_{r,\dots,r}\) with \(r=8\) and \(t=2\). From \eqref{eq:ak-balanced} derive that $\alpha_0=1, \alpha_1=16,$ and for \(k\ge 2\), we have $\alpha_k = \binom{2}{k}8^k +2\binom{8}{k}.$ The explicit polynomial is 
		\[
		\psi(K_{8,8})
		= 1 + 16x + 120x^2 + 112x^3 + 140x^4 + 112x^5 + 56x^6 + 16x^7 + 2x^8.
		\]
		The coefficient sequence $\{1, 16, 120, 112, 140, 112, 56, 16, 2\}$ is not unimodal, as it ascends from $1$ to $120$, descends to $112$, ascends again to $140$, and thereafter descends. Furthermore, \(112^{2} \ngeq 120 \cdot 140\). Therefore, neither unimodality nor log-concavity is applicable to \(\psi(K_{8,8})\).

	For \(r=5\) and \(t=2\), that is, for the balanced complete bipartite graph \(K_{5,5}\), a direct computation using \eqref{eq:ak-balanced} gives $\psi(K_{5,5}) = 1 + 10x + 45x^2 + 20x^3 + 10x^4 + 2x^5,$	with coefficient sequence $\{1, 10, 45, 20, 10, 2.\}$ The log-concavity inequality fails at index \(k=3\) as $\alpha_3^2 = 20^2 = 400 < 45\cdot 10 = 450 = \alpha_2 \alpha_4.$ Thus, \(\psi(K_{5,5})\) is not log-concave, although the sequence is still unimodal in this case.
	
	These examples show that, unlike in Theorem~\ref{thm:balanced-small-r}, the GPA of balanced complete multipartite graphs need not be unimodal or log-concave in general. Determining precisely for which pairs \((r,t)\) the polynomial \(\psi(K_{r,\dots,r})\) is unimodal or log-concave appears to be a delicate problem, and can be studied in future direction of research.

	\section{Log-concave and unimodal properties of general position polynomial}\label{sec 5}
	A broom \(B_{s,r}, s \geq 0, r \geq 0\), is a graph with vertices \(u_0, \ldots, u_s, v_1, \ldots, v_r\), and edges \(u_i u_{i+1}\) for \(i \in \{0, \ldots, s - 1\}\) and \(u_0 v_j\) for \(1\leq j \leq r\), see $B_{4,6}$ Figure\ref{fig:broom}. 	\begin{figure}[ht]
		\centering
		\begin{tikzpicture}[scale=1,
			every node/.style={circle,draw,inner sep=1pt,font=\small},
			>=stealth]
			
			\node (u0) at (0,0)  {$u_0$};
			\node (u1) at (1.5,0) {$u_1$};
			\node (u2) at (3,0)   {$u_2$};
			\node (u3) at (4.5,0) {$u_3$};
			\node (u4) at (6,0)   {$u_4$};
			
			\draw (u0) -- (u1) -- (u2) -- (u3) -- (u4);
			
			\node (v1) at (-1.1, 0.8) {$v_4$};
			\node (v2) at (-1.1, 0.0) {$v_4$};
			\node (v3) at (-1.1,-0.8) {$v_6$};
			\node (v4) at ( 0.9,  1.1) {$v_1$};
			\node (v5) at ( 0.0,  1.8) {$v_2$};
			\node (v6) at (-0.9, 1.4) {$v_3$};
			
			\draw (u0) -- (v1);
			\draw (u0) -- (v2);
			\draw (u0) -- (v3);
			\draw (u0) -- (v4);
			\draw (u0) -- (v5);
			\draw (u0) -- (v6);
			
		\end{tikzpicture}
		\caption{The broom graph $B_{4,6}$.}
		\label{fig:broom}
	\end{figure}
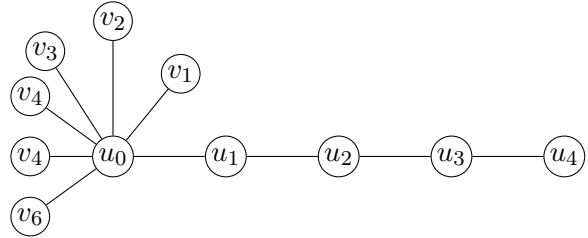
	The GPA of broom is (see \cite{0})
	\begin{equation}\label{broom}
		  \psi(B_{s,r}) = \sum_{k\geq 0} \beta_k x^k = 1 + (s + r + 1)x + \binom{s + r + 1}{2}x^2 + \sum_{k\geq 3} \left[s\binom{r}{k-1} + \binom{r}{k}\right] x^k. 
	\end{equation}

The following result discuss the unimodal and log-concave property of $ \psi(B_{s,r}).$
\begin{theorem}\label{prop:broom-unimodality-logconcavity-correct}
	Let $B_{s,r}$ be the broom graph and let $\psi(B_{s,r})=\sum_{k\ge 0} \beta_k x^k$ be its GPA. With $n=s+r+1$  the following assertions hold.
	\begin{enumerate}
		\item For all integers $r\ge 0$ and $s\ge 0$, the sequence $(\beta_k)_{k\ge 0}$ is
		log–concave at indices $k=1$ and $k=2$.
		\item For all integers $r\ge 0$ and $s\ge 0$, the tail subsequence
		$(\beta_k)_{k\ge 4}$ is log–concave. In particular, all possible failures of
		log–concavity can occur only at index $k=3$.
		
		\item For $r\le 2$ and all $s\ge 0$, the full sequence $(\beta_k)$ is
		log–concave and hence unimodal.
		
		\item For every fixed $r\ge 3$ there exists $S(r)$ such that for all
		$s\ge S(r)$ the sequence $(\beta_k)$ is not log–concave (log–concavity fails at
		$k=3$).
		
		\item For $r\le 5$ and all $s\ge 0$, the sequence $(\beta_k)$ is unimodal.
		
		\item For every $r\ge 6$ there exists $S'(r)$ such that for all
		$s\ge S'(r)$ the sequence $(\beta_k)$ is not unimodal. In particular, the polynomial  $\psi(B_{17,6}) $ is neither log–concave  nor unimodal.
	\end{enumerate}
\end{theorem}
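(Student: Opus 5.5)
The key observation is that for $k\ge 3$ the coefficient $\beta_k=s\binom{r}{k-1}+\binom{r}{k}$ in \eqref{broom} is the coefficient of $x^k$ in $(1+sx)(1+x)^r$. This polynomial has only real (nonpositive) roots, so by Newton's inequalities its coefficient sequence $c_k=s\binom{r}{k-1}+\binom{r}{k}$ ($k\ge 0$) is log-concave and has no internal zeros. The broom coefficients agree with $c_k$ for every $k\ge 3$; only $\beta_0,\beta_1,\beta_2$ differ. The plan is to derive all six assertions from this fact together with a few explicit comparisons at the indices $k\le 3$.

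\emph{Assertions (1) and (2).} At $k=1$ the inequality $n^2\ge \binom{n}{2}$ is trivial. At $k=2$, note that $\beta_3$ counts $3$-element general position sets, so $\beta_3\le\binom{n}{3}$. Hence $\beta_2^2=\binom{n}{2}^2\ge n\binom{n}{3}\ge \beta_1\beta_3$ by log-concavity of binomial coefficients. For (2), each inequality $\beta_k^2\ge\beta_{k-1}\beta_{k+1}$ with $k\ge 4$ involves only indices $\ge 3$, so it is the corresponding inequality for $(c_k)$. In fact $(\beta_k)_{k\ge3}$ is already log-concave, so only the index $k=3$, which involves $\beta_2=\binom{n}{2}$, can fail.

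\emph{Assertions (3) and (4).} For (3), if $r\le 2$ then $\beta_4=s\binom{r}{3}+\binom{r}{4}=0$, so the $k=3$ inequality reads $\beta_3^2\ge 0$. Together with (1) and (2) this gives log-concavity. The nonzero coefficients are then $\beta_0,\dots,\beta_3$ (or $\beta_0,\dots,\beta_2$ if $r\le1$), so there are no internal zeros and unimodality follows. For (4), fix $r\ge3$ and let $s\to\infty$. Then $\beta_3^2=\bigl(s\binom r2+\binom r3\bigr)^2=\Theta(s^2)$, while $\beta_2\beta_4=\binom{s+r+1}{2}\bigl(s\binom r3+\binom r4\bigr)=\Theta(s^3)$ because $\binom r3>0$. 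Hence the inequality fails at $k=3$ for all $s$ beyond some $S(r)$.

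\emph{Assertions (5) and (6).} We have $\beta_0\le\beta_1\le\beta_2$, and the tail $(\beta_k)_{k\ge3}$ is log-concave without internal zeros, hence unimodal. The full sequence can fail to be unimodal only if $\beta_2>\beta_3<\beta_4$. Indeed, if $\beta_4\le\beta_3$, the tail peaks at index $3$ and is nonincreasing from there, so the whole sequence peaks at index $2$ or $3$. Now
\[
\beta_4-\beta_3=s\Bigl(\tbinom r3-\tbinom r2\Bigr)+\Bigl(\tbinom r4-\tbinom r3\Bigr).
\]
For $r\le5$ both brackets are $\le0$, since $\binom r3\le\binom r2$ iff $r\le5$ and $\binom r4\le\binom r3$ iff $r\le7$. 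This gives (5). For $r\ge6$ the first bracket is positive, so $\beta_4>\beta_3$ once $s$ is large. Moreover $\beta_2=\Theta(s^2)$ exceeds $\beta_3=\Theta(s)$, so non-unimodality holds for $s\ge S'(r)$. For the explicit instance we compute, for $s=17$, $r=6$: $\beta_2=\binom{24}{2}=276$, $\beta_3=17\cdot15+20=275$, and $\beta_4=17\cdot20+15=355$. This pattern is neither unimodal nor log-concave, since $275^2<276\cdot355$.

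The main obstacle is the tail log-concavity in (2). A direct approach writes $\beta_k=\binom{r}{k-1}\bigl(s-1+\tfrac{r+1}{k}\bigr)$, but the second factor is log-convex in $k$, so a termwise argument fails. For this reason the identification with the coefficients of the real-rooted polynomial $(1+sx)(1+x)^r$ is the route I would commit to.
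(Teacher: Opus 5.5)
Your proposal is correct. Its backbone is the same as the paper's proof of (2): you recognize $\beta_k$ for $k\ge 3$ as the coefficients of the real-rooted polynomial $(1+sx)(1+x)^r$ and apply Newton's inequalities. Parts (3), (4) and (6) also match the paper up to presentation. For (4) the paper expands $\beta_3^2-\beta_2\beta_4=\frac{r(r-1)Q(r,s)}{144}$ and reads off the leading term $(24-12r)s^3$, where you simply compare $\Theta(s^2)$ with $\Theta(s^3)$.

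You genuinely differ in two places. At $k=2$ the paper expands $\beta_2^2-\beta_1\beta_3=\frac{(r+s+1)P_2(r,s)}{12}$ and checks the sign of the cubic $P_2$. Your chain $\binom{n}{2}^2\ge n\binom{n}{3}\ge n\beta_3$ needs no computation, only the bound $\beta_3\le\binom{n}{3}$ (equivalently $s\binom{r}{2}+\binom{r}{3}\le\binom{s+r+1}{3}$).

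For (5) the paper lists the coefficients separately for $r=3,4,5$. For $r=5$ it misstates $\beta_4$ and $\beta_5$, which are $5+10s$ and $1+5s$, though the decreasing chain it needs still holds. You instead use the tail log-concavity from (2) to reduce unimodality to excluding the single pattern $\beta_2>\beta_3<\beta_4$. You then decide this by the sign of $s\bigl(\binom{r}{3}-\binom{r}{2}\bigr)+\bigl(\binom{r}{4}-\binom{r}{3}\bigr)$. This is uniform in $r$, proves (5) and (6) with one computation, and explains the threshold as $\binom{r}{3}\le\binom{r}{2}\iff r\le 5$.

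Two small points need tightening:
\begin{itemize}
\item $\beta_1\le\beta_2$ requires $n\ge 3$. For $r+s\le 1$ the sequence is $(1,1)$ or $(1,2,1)$, which is trivially unimodal.
\item The step ``if $\beta_4\le\beta_3$ the tail is nonincreasing from index $3$'' should be justified by log-concavity: the ratios $\beta_{k+1}/\beta_k$ are nonincreasing while the terms are positive. Unimodality of the tail alone would still allow $\beta_3=\beta_4<\beta_5$.
\end{itemize}
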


\begin{proof}
	Throughout the proof,  we use \eqref{broom}, so with $n=s+r+1$, we have
	\[
	\beta_0=1,\quad \beta_1=n,\quad \beta_2=\binom{n}{2},\quad
	\beta_k=\binom{r}{k}+s\binom{r}{k-1}\ (k\ge 3).
	\]
	For $k=1$, we have
	\[
	\beta_1^2 - \beta_0\beta_2
	= n^2 - \binom{n}{2}
	= n^2 - \frac{n(n-1)}{2}
	= \frac{n(n+1)}{2}\ge 0, 
	\]
	which holds for all integers $n\ge 0$. Thus log–concavity holds at $k=1$.
	For, $k=2$, using $\beta_3=\binom{r}{3}+s\binom{r}{2}$, a straightforward  calculation gives
	\begin{align*}
		\beta_2^2 - \beta_1\beta_3
		&= \binom{n}{2}^2 - n\Bigl(\binom{r}{3}+s\binom{r}{2}\Bigr) = \frac{(r+s+1) P_2(r,s)}{12},
	\end{align*}
	where
	\[
	P_2(r,s)
	= r^3 + 3r^2 s + 9r^2 + 9r s^2 + 12rs - 4r + 3s^3 + 3s^2.
	\]
	All monomials in $P_2(r,s)$ have nonnegative coefficients except $-4r$. At	$s=0$, we have $P_2(r,0) = r(r^2+9r-4)>0$, for every integer $r\ge 1$. For $s>0$ the extra terms
	$3r^2 s + 9rs^2 + 12rs + 3s^3 + 3s^2$ are positive, so $P_2(r,s)>0$ for all
	$r\ge 1$, $s\ge 0$. Hence, $\beta_2^2\ge \beta_1\beta_3$ holds.\\
	(2) For $k\ge 4$, consider the auxiliary polynomial
	\[
	q(x)=(1+sx)(1+x)^r = \sum_{k\ge 0} \gamma_k x^k.
	\]
	It is clear that
	\[
	\gamma_0=1,\quad \gamma_1=s+r,\quad
	\gamma_k=\binom{r}{k}+s\binom{r}{k-1}\quad(k\ge 1).
	\]
	Thus $\gamma_k=\beta_k$, for all $k\ge 3$, and in particular $\gamma_{k-1}=\beta_{k-1},
	\gamma_k=\beta_k,\gamma_{k+1}=\beta_{k+1}$, for every $k\ge 4$. The roots of $(1+x)^r$ are all real ($x=-1$ with multiplicity $r$), and the	root of $(1+sx)$ is real ($x=-1/s$ for $s>0$). Hence $q(x)$ has only real
	roots, and it is well known by Newton's Inequality that its coefficient sequence $(\gamma_k)$ is log–concave \cite{stanley, hardy}. Therefore for every $k\ge 4$,
	\[
	\beta_k^2 - \beta_{k-1}\beta_{k+1}
	= \gamma_k^2 - \gamma_{k-1}\gamma_{k+1}\ge 0.
	\]
	This proves (2), as any violation of log–concavity can occur only at index $k=3$.
	
	For  $r\le 2$, we already know that log–concavity holds at $k=1,2$ and at all $k\ge 4$. Thus, we only need to check the remaining index $k=3$ when it exists.	If $r=0$, the graph is a path and $\deg\psi(B_{s,0})\le 2$, so log–concavity is trivial. If $r=1$, then $\deg\psi(B_{s,1})\le 2$ again path and the claim is
	trivial. If $r=2$, we have
	\[
	\beta_3 = \binom{2}{3} + s\binom{2}{2} = s,\quad
	\beta_4 = \binom{2}{4} + s\binom{2}{3} = 0,
	\]
	so, we obtain $\beta_3^2 - \beta_2\beta_4 = s^2\ge 0.$ Hence, $(\beta_k)$ is log–concave for all $s\ge 0,$ when $r\le 2$, proving (3). In	particular it is also unimodal.
	
	At $k=3$ for $r\ge 3$ and large $s$, we consider $\beta_3^2 - \beta_2\beta_4,$ where $\beta_3=\binom{r}{3}+s\binom{r}{2}, $ and $
	\beta_4=\binom{r}{4}+s\binom{r}{3}.$ Now, we have $\beta_3^2 - \beta_2\beta_4
	= \frac{r(r-1)Q(r,s)}{144},$ where
	\begin{align*}
		Q(r,s)
		=& r^4 + r^3(6s-8) + r^2(9s^2 - 33s + 29) + r(-12s^3 + 15s^2 + 51s - 34)\\
		&+ 24s^3 + 6s^2 - 18s.
	\end{align*}
	For fixed $r$, $Q(r,s)$ is a cubic polynomial in $s$, whose leading term is $(24-12r)s^3.$ If $r\ge 3$, then $24-12r<0$, so $Q(r,s)<0$, for all sufficiently large $s$. Hence, $\beta_3^2 - \beta_2\beta_4<0$ for all large $s$, log–concavity fails at $k=3$. This proves (4).
	
	For unimodality with $r\le 5$, we briefly analyze the shape of $(\beta_k)$, for each $r\le 5$. First note that for all $s,r\ge 0$, $\beta_0<\beta_1<\beta_2$ as soon as $n=s+r+1\ge 3$, and the remaining small cases are trivial. For $k\ge 3$, the coefficients are given by \eqref{broom}. For small $r$, we can write them explicitly. If $r=3$, then $\beta_3=1+3s,\beta_4=s,$ and $\beta_k=0,$ for $k\ge 5.$  As $\beta_2>\beta_3$, we see that 	$\beta_2-\beta_3=\tfrac12(s^2+s+10)>0$, and clearly $\beta_3>\beta_4$. Thus, we obtain  $\beta_0<\beta_1<\beta_2>\beta_3>\beta_4.$ So, $(\beta_k)$ is unimodal for all $s\ge 0$. If $r=4$, then $\beta_3=4+6s,  \beta_4=1+4s, \beta_5=s, $ and $\beta_k=0$ for $k\ge 6,$  
	and we have
	\[
	\beta_2-\beta_3=\tfrac12(s^2-3s+12)>0,\quad
	\beta_3-\beta_4=3+2s>0,\quad
	\beta_4-\beta_5=1+3s>0.
	\]
	Hence, we obtain  $\beta_0<\beta_1<\beta_2>\beta_3>\beta_4>\beta_5,$ and $(\beta_k)$ is unimodal for all $s\ge 0$. If $r=5$, then $\beta_3=10+10s,  \beta_4=5+5s,  \beta_5=1+s,  \beta_6=s,$ and $\beta_k=0$ for $k\ge 7.$ So, we obtain $\beta_3>\beta_4>\beta_5>\beta_6$, for all $s\ge 0$, while $\beta_2-\beta_3=\tfrac12(s^2-9s+10)$ may be positive or negative. Thus we have two possibilities: if $\beta_2\ge \beta_3$, then $\beta_0<\beta_1\le \beta_2>\beta_3>\beta_4>\beta_5>\beta_6$ and the peak is at $k=2$, and   if $\beta_2<\beta_3$, then $\beta_0<\beta_1<\beta_2<\beta_3>\beta_4>\beta_5>\beta_6$ and the peak is at $k=3$. In either case, there is a unique index $m\in\{2,3\}$ such that $\beta_0\le\cdots\le \beta_m\ge\cdots\ge \beta_{\deg\psi}$, so $(\beta_k)$ is unimodal. The cases $r=0,1,2$ are trivial and are covered by (3). This proves (5).
	
	For $r\ge 6$ and large $s$, we fix $r\ge 6$ and let $s\to\infty$. From \eqref{broom}, we have
	\[
	\beta_2=\binom{n}{2}=\frac{(s+r+1)(s+r)}{2},\quad
	\beta_3=\binom{r}{3}+s\binom{r}{2},\quad
	\beta_4=\binom{r}{4}+s\binom{r}{3}.
	\]
	With asymptotics values, we assume that 
	\[
	\beta_2 \sim \frac{s^2}{2},\quad
	\beta_3 \sim s\binom{r}{2},\quad
	\beta_4 \sim s\binom{r}{3}\qquad (s\to\infty).
	\]
	Hence, we obtain	$\frac{\beta_3}{\beta_2} \sim \tfrac{2\binom{r}{2}}{s}\to 0.$
	So, for all sufficiently large $s$, we have $\beta_2>\beta_3$. On the other hand, for every $r\ge 6$, we have
	\[
	\frac{\beta_4}{\beta_3} \sim \frac{\binom{r}{3}}{\binom{r}{2}} = \frac{r-2}{3} > 1.
	\]
	So, for all sufficiently large $s$, we have $\beta_4>\beta_3$. Therefore, for all large $s$, we obtain
	\[
	\beta_0<\beta_1<\beta_2>\beta_3<\beta_4,
	\]
	thereby the coefficient sequence is not unimodal. This gives us the existence of $S'(r)$ as claimed in (6).  For every $r\ge 6,$ there exists $S'(r)$ such that for all $s\ge S'(r)$ the sequence $(\beta_k)$ is not unimodal. In particular, for
	$B_{17,6}$ the polynomial 
	\[
	\psi(B_{17,6}) = 1 + 24x + 276x^2 + 275x^3 + 355x^4 + 261x^5 + 103x^6 + 17x^7,
	\]
	whose coefficient sequence is neither log–concave (since $275^2 < 276\cdot 355$) nor unimodal (it satisfies
	$\beta_1<\beta_2>\beta_3<\beta_4>\beta_5>\cdots$).
\end{proof}

		Let $G_n$ be the comb on $2n$ vertices obtained from the path $P_n$ by
		attaching a pendent vertex to each vertex of $P_n$, for $n\ge 1$. The unimodality of $G_{n}$ is proved in \cite[Theorem 4.1]{0}. We will show its GPA is log-concave.		
		\begin{theorem}\label{thm:comb-unimodal-correct}
			The GPA of $G_n$  is $\psi(G_n)=\sum_{k\ge 0} \alpha_k x^k,$ where $\alpha_0 = 1,  \alpha_1 = 2n,  \alpha_2 = \binom{2n}{2},$ 	and,  $\alpha_k = 4\binom{n}{k},$ for $k\ge 3$.  The coefficient sequence $(\alpha_0,\dots,\alpha_n)$ of $\psi(G_n)$ is log-concave for all  $n\ge 1$, and hence $\psi(G_n)$ is unimodal.
		\end{theorem}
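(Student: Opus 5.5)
The plan is to take the coefficient formulas in the statement as given (they come from \cite{0}) and verify the log-concavity inequalities $\alpha_k^2\ge\alpha_{k-1}\alpha_{k+1}$ index by index. This follows the template of the case $r=2$ in Theorem~\ref{thm:balanced-small-r}, since the sequence has the same shape there: a binomial tail $c\binom{n}{k}$ preceded by two exceptional initial coefficients.

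\textbf{Degenerate cases and internal zeros.} First I dispose of small $n$. For $n\le 2$ we have $4\binom{n}{k}=0$ for $k\ge 3$, so the polynomial has degree at most $2$. Its sequence is $1,2n,\binom{2n}{2}$, and only the $k=1$ inequality needs checking. For $n\ge 3$ every coefficient $\alpha_0,\dots,\alpha_n$ is positive, so there are no internal zeros, and unimodality will follow from log-concavity as in the proof of Theorem~\ref{thm:balanced-small-r} (via \cite{stanley}).

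\textbf{The inequalities, by index.}
\begin{itemize}
\item For $k\ge 4$, the triple $(\alpha_{k-1},\alpha_k,\alpha_{k+1})$ equals $4$ times a consecutive triple of $\binom{n}{j}$. Log-concavity is then immediate from that of the binomial coefficients, the common factor $16$ cancelling.
\item For $k=1$: $(2n)^2-\binom{2n}{2}=n(2n+1)\ge 0$.
\item For $k=2$: $\binom{2n}{2}^2-2n\cdot 4\binom{n}{3}=n^2(2n-1)^2-\tfrac43 n^2(n-1)(n-2)=\tfrac{n^2}{3}(8n^2-5)>0$.
\item For $k=3$ (relevant only when $n\ge 4$): write $\binom{n}{4}=\binom{n}{3}\tfrac{n-3}{4}$. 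The inequality $16\binom{n}{3}^2\ge \binom{2n}{2}\cdot 4\binom{n}{4}$ then reduces, after dividing by $\binom{n}{3}>0$, to $16\binom{n}{3}\ge n(2n-1)(n-3)$. This is equivalent to $8(n-1)(n-2)\ge 3(2n-1)(n-3)$, i.e.\ to $2n^2-3n+7\ge 0$. That quadratic has negative discriminant, so the inequality always holds. This is exactly the polynomial already met in the case $r=2$, $k=3$ of Theorem~\ref{thm:balanced-small-r}.
\end{itemize}

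\textbf{Main obstacle.} The only genuine work is at the two transitional indices $k=2$ and $k=3$, where the exceptional coefficient $\binom{2n}{2}$ meets the binomial regime. Even these reduce to elementary polynomial inequalities. The one point needing care is to restrict each inequality to the values of $n$ for which all three indices lie in $\{0,\dots,n\}$: $k=2$ is relevant for $n\ge 3$, and $k=3$ for $n\ge 4$. I would also double-check the small cases $n=3,4$ numerically, for example $\psi(G_3)=1+6x+15x^2+4x^3$, to confirm the boundary behaviour.
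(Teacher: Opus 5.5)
Your log-concavity check is correct and matches the paper's. You use the same split into the binomial regime $k\ge 4$ and the indices $k=1,2,3$, and you get the same quantities $n(2n+1)$, $\tfrac{n^2}{3}(8n^2-5)$ and $2n^2-3n+7$. Factoring out $\binom{n}{3}$ at $k=3$ is a tidy route to the paper's $\tfrac{n^2(n-1)(n-2)}{18}(2n^2-3n+7)$. Your separate treatment of $n\le 2$, where $\psi(G_n)$ has degree $2$ rather than $n$, is more careful than the paper's indexing $(\alpha_0,\dots,\alpha_n)$.

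\textbf{The gap: the coefficient formula is assumed, not proved.} The theorem also asserts $\alpha_k=4\binom{n}{k}$ for $k\ge 3$, and you take this as given, attributing it to \cite{0}. The paper cites \cite{0} only for unimodality and proves the formula itself by characterizing general position sets $S$ with $|S|\ge 3$. Without that step, you have shown log-concavity of a sequence whose identification with $\psi(G_n)$ is unproved. To close the gap, label the spine $p_1,\dots,p_n$ and the leaves $q_1,\dots,q_n$, and let $I$ be the set of indices met by $S$.
\begin{enumerate}
\item[(i)] $S$ cannot contain both $p_i$ and $q_i$, because $p_i$ lies on the geodesic from $q_i$ to any third vertex of $S$. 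Hence $|I|=k$.
\item[(ii)] If $u<i<v$ with $u,v\in I$, then $p_i\notin S$. Since $G_n$ is a tree, the unique path between the vertices of $S$ at indices $u$ and $v$ passes through $p_u,\dots,p_v$, whichever of $p$ or $q$ was chosen there. So every non-extreme index of $I$ contributes its leaf.
\item[(iii)] Conversely, all four sets obtained by choosing $p$ or $q$ at $\min I$ and at $\max I$ (with leaves elsewhere) are in general position. Write $d(x_i,x_j)=|i-j|+\epsilon_i+\epsilon_j$, with $\epsilon=1$ for a leaf and $\epsilon=0$ for a spine vertex. In any triple at indices $a<b<c$, the middle vertex is the leaf $q_b$, so $d(x_a,q_b)+d(q_b,x_c)=d(x_a,x_c)+2$. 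An end vertex cannot lie between the other two either, since $d(q_b,x_a)+d(x_a,x_c)>d(q_b,x_c)$, and symmetrically for $x_c$.
\end{enumerate}
This gives exactly $4\binom{n}{k}$ general position sets of each size $k\ge 3$, and your inequality checks then finish the proof.
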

		
		\begin{proof}
			We  only need to work with general position sets of cardinality  $k\ge 3$. We label the vertices of $P_n$ by $p_1,\dots,p_n$ in order and the pendent
			neighbors by $q_1,\dots,q_n$, where $q_i$ is adjacent only to $p_i$. Let $S\subseteq V(G_n)$ be a general position set with $|S|\ge 3$.
			We first show that $S$ contains at most two path vertices $p_i$. If $S$ contained three distinct path vertices $p_i,p_j,p_\ell$ with $i<j<\ell$, then the unique shortest path between $p_i$ and $p_\ell$ in $G_n$ is the subpath $p_i\sim p_{i+1}\sim \dots\sim p_\ell$, on which $p_j$ lies. Thus, $d(p_i,p_\ell)=d(p_i,p_j)+d(p_j,p_\ell).$ So, $p_j$ lies on a geodesic between $p_i$ and $p_\ell$, contradicting that $S$ is in general position. Hence, at most two $p_i$ can belong to $S$. We now refine the above fact, and claim that: $S$ be a general position set with $|S|\ge 3$ and let
				$I=\{i\in[n]: p_i\in S\text{ or }q_i\in S\}$ be the set of \emph{indices}
				used by $S$. Then $|I|\ge 3$ and, if $i\in I$ is not the minimum or maximum
				of $I$, we must have $q_i\in S$ and $p_i\notin S$. \\
				Proof of Claim: Since $|S|\ge 3$ and each index contributes at most two vertices, we clearly
				have $|I|\ge 2$. If $|I|=2$ and one of these indices contributes two vertices,	then the third vertex must come from one of these indices, but then some	vertex lies on a geodesic between the other two (this can be checked directly	on a path with one pendent neighbor at each endpoint). Thus for $|S|\ge 3$
				we actually have $|I|\ge 3$. Let $i\in I$ be neither the smallest nor the largest index of $I$. Then there
				exist $u<v$ in $I$ with $u<i<v$. Suppose $p_i\in S$. We consider two cases. (1) Either $p_u\in S$ or $p_v\in S$. Then by the same argument as in the  proof of claim, $p_i$ lies on the unique shortest	path between $p_u$ and $p_v$. So, $\{p_u,p_i,p_v\}$ is not in general position, a	contradiction. (2) If $p_u,p_v\notin S$, then $q_u,q_v\in S$. The unique shortest	path between $q_u$ and $q_v$ is $q_u\sim p_u\sim p_{u+1}\sim \dots\sim p_v\sim q_v,$ and again $p_i$ lies internally on this path. So, we obtain $d(q_u,q_v)=d(q_u,p_i)+d(p_i,q_v),$ which contradicts that $S$ is in general position. Thus, if $i$ is not the minimum or maximum index in $I$, it is impossible that $p_i\in S$, and hence necessarily $q_i\in S$. The above claim  shows that for any general position set 	$S$ of size $k\ge 3$ the set of indices $I\subseteq\{1,2,\dots,n\}$ with $|I|=k$ can be	chosen arbitrarily, and then: (1) for each \emph{internal} index in $I$ we must take the leaf $q_i$; (2) for the two \emph{extreme} indices (the minimum and maximum of $I$), we may choose either the path vertex $p_i$ or the leaf $q_i$. Thus, for any fixed $k$-subset $I=\{i_1<\dots<i_k\}\subseteq [n]$ with	$k\ge 3$, there are exactly $4$ admissible choices for $S$:
			\[
			(p_{i_1}\ \text{or}\ q_{i_1}),\quad
			q_{i_2},\dots,q_{i_{k-1}},\quad
			(p_{i_k}\ \text{or}\ q_{i_k}),
			\]
			and no other choices yield a general position set of size $k$. Since there
			are $\binom{n}{k}$ ways to choose $I$, we obtain, $\alpha_k = 4\binom{n}{k},$ for $k\ge 3$. 
			
			 We now show that $(\alpha_k)_{k=0}^n$ is log-concave.  For $k\ge 4$, we have $\alpha_{k-1}=4\binom{n}{k-1}$, $\alpha_k=4\binom{n}{k}$,
			$\alpha_{k+1}=4\binom{n}{k+1}$, so
			\[
			\alpha_k^2  =  16\binom{n}{k}^2
			 \ge  16\binom{n}{k-1}\binom{n}{k+1}
			 =  \alpha_{k-1}\alpha_{k+1},
			\]
			since the binomial coefficients form a log-concave sequence in $k$. Consequently, log-concavity is trivially satisfied for \(4 \leq k \leq n-1\), provided those indices fall inside the binomial range. Therefore, it is sufficient to validate the inequalities for $k=1,2,3$. For \( k=1 \), we obtain
			\[
			\alpha_1^2 - \alpha_0\alpha_2 = (2n)^2 - \binom{2n}{2} = 4n^2 - \frac{2n(2n-1)}{2} 
			= n(2n+1)\ge 0,
			\]
			which holds for all $n\ge 1$. If $k=2$, then $\alpha_2=\binom{2n}{2}$ and
			$\alpha_3=4\binom{n}{3}=\frac{2}{3}n(n-1)(n-2)$, so we obtain
			\begin{align*}
				\alpha_2^2 - \alpha_1\alpha_3
				&= \binom{2n}{2}^2 - (2n)\cdot 4\binom{n}{3} = \frac{n^2}{3}\bigl(8n^2 - 5\bigr).
			\end{align*}
			Because $8n^2-5>0$ for every $n\ge 1$, it follows that $\alpha_2^2\ge \alpha_1\alpha_3$. For \( k=3 \), \( \alpha_2 = \binom{2n}{2} \), \( \alpha_3 = 4\binom{n}{3} \), and \( \alpha_4 = 4\binom{n}{4} \). Consequently, we possess
			\[
			\alpha_3^2 - \alpha_2\alpha_4
			= 16\binom{n}{3}^2 - \binom{2n}{2}\cdot 4\binom{n}{4}
			= \frac{n^2(n-1)(n-2)}{18}\bigl(2n^2 - 3n + 7\bigr).
			\]
			The quadratic \(2n^2 - 3n + 7\) has a discriminant \(\Delta = (-3)^2 - 4 \cdot 2 \cdot 7 = 9 - 56 < 0\) and a positive leading coefficient; therefore, it is positive for any real \(n\). Given that $n^2(n-1)(n-2)\ge 0$ for all integers $n\ge 2$, it follows that $\alpha_3^2\ge \alpha_2\alpha_4$.  Thus, the coefficient sequence $(\alpha_0,\dots,\alpha_n)$ is log-concave. All coefficients
			are strictly positive, hence there are no internal zeros, and therefore the sequence is unimodal.
		\end{proof}

	The vertex set of Kneser graph \( K(n, 2) \) contains all 2-subsets of an \( n \)-set, two vertices being adjacent if the corresponding sets are disjoint. A Kneser graph graph $K_{5,2}$ is shown in Figure \ref{fig:kneser-5-2}.	
	\begin{figure}[H]
		\centering
		\begin{tikzpicture}[scale=2,
			every node/.style={circle,draw,inner sep=1.5pt,font=\scriptsize},
			>=stealth]
			
			\foreach \i/\lbl in {0/{01},1/{12},2/{23},3/{34},4/{40}}{
				\node (u\i) at (72*\i:1.2) {$\lbl$};
			}
			
			\foreach \i/\lbl in {0/{02},1/{24},2/{41},3/{13},4/{30}}{
				\node (v\i) at (72*\i+36:0.6) {$\lbl$};
			}
			
			\foreach \i in {0,1,2,3,4}{
				\pgfmathtruncatemacro{\j}{mod(\i+1,5)}
				\draw (u\i) -- (u\j);
			}
			
			\foreach \i in {0,1,2,3,4}{
				\pgfmathtruncatemacro{\j}{mod(\i+1,5)}
				\draw (v\i) -- (v\j);
			}
			
			\foreach \i in {0,1,2,3,4}{
				\draw (u\i) -- (v\i);
			}
			
		\end{tikzpicture}
		\caption{The Kneser graph $K(5,2)$, isomorphic to the Petersen graph. Each vertex is a $2$-subset of $\{0,1,2,3,4\}$, and two vertices are adjacent if the corresponding subsets are disjoint.}
		\label{fig:kneser-5-2}
	\end{figure}
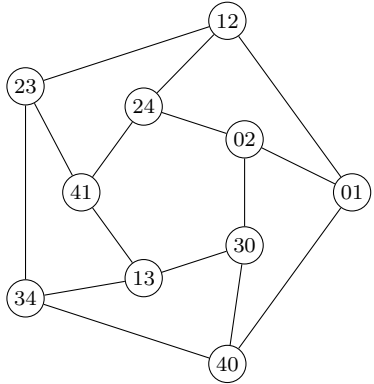
	
The unimodal of $\psi(K(n,2))$ is proved in \cite[Theorem 4.2]{0}. 	We now observe that, despite unimodality, log-concavity fails in general.  The GPA is 
\begin{align*}
	\psi(K(n, 2))
	&= 1 + \binom{n}{2} x  + \left( \binom{n}{3} + 12 \binom{n}{4} \right)x^3
	+ 15 \binom{n}{4} x^4
	+ 6 \binom{n}{4} x^5
	+ \binom{n}{4} x^6 \\
	&\quad + \sum_{j=2}^{n-1} \left( \binom{n}{2j} \frac{(2j)!}{2^j j!}
	+ n \binom{n-1}{j} \right)x^j
\end{align*}
	
		From the above expression for $K(10,2)$, we obtain
		\[
		\psi(K(10,2))
		= 1 + 45x + 990x^2 + 6630x^3 + 9135x^4
		+ 3465x^5 + 1050x^6 + 360x^7 + 90x^8 + 10x^9.
		\]
		At $k=6$, the condition  $\alpha_6^2 \ge \alpha_5 \alpha_7$  is not satisfied, since 
		\[
		\alpha_6^2 = 1050^2 = 1102500 \ngeq 	\alpha_5 \alpha_7 = 3465\cdot 360 = 1247400.
		\]
		
	Review the family of graphs $T^*(r,a)$, defined as a full $a$-partite graph, with each partition containing $r$ vertices. Label the vertices in part $i$ by $i_1,\dots,i_r$. For each $1\leq j\in r$, delete the edges of the clique induced by	$\{1_j,2_j,\dots,\alpha_j\}$,  the order of $T^*(r,a)$ is $n=ra$. Now, we investigate unimodality and log-concavity of the GPAs of the family of graphs $T^*(r,a)$. For $a\in \{1,2\}$, it is known that $\psi(T^*(r,a))$ is unimodal \cite[Proposition 4.3]{0}.
	
	The GPA of $T^*(r,a)$ is (see \cite{0})
	\begin{align*}
		\psi(T^*(r,a))
		&= 1 + nx + \binom{n}{2}x^2 + 2a(a-1)\binom{r}{2}x^3 + \binom{a}{2}\binom{r}{2}x^4 + \sum_{i\ge 3} \Bigl[a\binom{r}{i} + r^i\binom{a}{i}\Bigr] x^i.
	\end{align*}
	
	We will show that for $a\ge 3$ this family does not, in general, have unimodal
	or log-concave GPAs. This concerns Problem 5.2 \cite{0}.	However, we note that $\psi(T^*(r,a))$ is not accurate and needs refinement. Similarly, \cite[Proposition 4.3]{0} must be revised, since $\psi(T^*(r,a))$ is not valid. Our next result gives the refined GPA of $T^*(r,a).$

		\begin{theorem}\label{thm:correct-gpa-Tstar}
			Let $T^*(r,a)$ be the graph with vertex set
			$
			V\bigl(T^*(r,a)\bigr)=\{ i_j:\ i\in [a],\ j\in [r] \},
			$
			where two distinct vertices $i_j$ and $k_\ell$ are adjacent if and only if
			$
			i\neq k $ and $ j\neq \ell.
			$
		If $\psi\bigl(T^*(r,a)\bigr)=\sum_{k\ge 0}\alpha_k x^k,$ then $\alpha_0=1$, $\alpha_1=ar$, and
			$
			\alpha_2=\binom{ar}{2}.
			$
			Moreover, the  GPA of $T^*(r,a)$ is given by the following three cases.
			\begin{enumerate}[label=\textup{(\roman*)}]
				\item If $\min\{r,a\}=1$, then $T^*(r,a)\cong \overline{K}_{ar}$ and hence
			$\psi\bigl(T^*(r,a)\bigr)=(1+x)^{ar}. $
			\item If $\min\{r,a\}=2$, and $m=\max\{r,a\}$, then $T^*(r,a)\cong K_{m,m}-M$, where $M$ is a perfect matching, and
			$$
			\psi\bigl(T^*(r,a)\bigr)
			=
			1+2mx+\binom{2m}{2}x^2+2\sum_{k=3}^{m}\binom{m}{k}x^k.
			$$
			\item If $r\ge 3$ and $a\ge 3$, then for every $k\ge 3$,
			$$
			\alpha_k
			=
			a\binom{r}{k}
			+
			r\binom{a}{k}
			+
			k!\binom{a}{k}\binom{r}{k}
			+
			\binom{a}{2}\binom{r}{2}\binom{4}{k},
			$$
			and
			{\footnotesize$$
			\psi\bigl(T^*(r,a)\bigr)
			=
			1+arx+\binom{ar}{2}x^2
			+
			\sum_{k=3}^{\max\{a,r,4\}}
			\left(
			a\binom{r}{k}
			+
			r\binom{a}{k}
			+
			k!\binom{a}{k}\binom{r}{k}
			+
			\binom{a}{2}\binom{r}{2}\binom{4}{k}
			\right)x^k.
			$$}
			\end{enumerate}
		\end{theorem}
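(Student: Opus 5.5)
The plan is to work directly with the metric of $T^*(r,a)$. Think of the vertex $i_j$ as the cell in row $i$ and column $j$ of an $a\times r$ grid. Two vertices are adjacent exactly when they differ in both coordinates, so $T^*(r,a)$ is the complement of the rook's graph $K_a\,\square\,K_r$. The values $\alpha_0,\alpha_1,\alpha_2$ follow from Lemma~\ref{lem:basic-small}, with the convention that pairs in different components are also in general position.

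\textbf{Case (i).} If $\min\{r,a\}=1$, all vertices share a row or a column, so there are no edges. No vertex lies on a geodesic between two others, every subset is in general position, and $\psi=(1+x)^{ar}$.

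\textbf{Case (ii).} If $\min\{r,a\}=2$, say $a=2$, the two rows form the bipartition. Vertex $1_j$ is adjacent to $2_\ell$ exactly when $j\ne\ell$, which gives $K_{m,m}-M$. Here vertices in the same row are at distance $2$ when $m\ge3$, and matched pairs $1_j,2_j$ are at distance $3$.

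For $|S|\ge3$ I will show that $S$ lies entirely in one row. Suppose $S$ meets both rows and, without loss of generality, contains $1_j,1_\ell$ together with some $2_p$.
\begin{itemize}
\item If $p\notin\{j,\ell\}$, then $2_p$ is a common neighbour of $1_j$ and $1_\ell$, hence lies on a geodesic between them.
\item If $p=j$, then $1_j$ and $2_j$ are at distance $3$, and $1_j\sim 2_\ell\sim 1_\ell\sim 2_j$ is a geodesic, so $1_\ell$ lies on it.
\end{itemize}
In both situations $S$ fails to be in general position. Conversely, a set inside one row is in general position, since all its pairs are at distance $2$ with middle vertices in the other row. This gives $2\binom{m}{k}$ sets of size $k$ for each $k\ge3$. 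The cases $m\le 2$ are small and can be checked by hand.

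\textbf{Case (iii).} Let $r,a\ge3$. Then any two vertices sharing a row (or a column) have a common neighbour, so the diameter is $2$. A triple $\{x,y,z\}$ is bad exactly when $x,z$ share a row or column and $y$ is adjacent to both, that is, $y$ differs from each of them in both coordinates. I then sort a general position set $S$ with $|S|\ge3$ into four disjoint types:
\begin{enumerate}
\item all vertices in one row, contributing $a\binom{r}{k}$;
\item all vertices in one column, contributing $r\binom{a}{k}$;
\item pairwise distinct rows and columns, contributing $k!\binom{a}{k}\binom{r}{k}$ (choose $k$ rows, $k$ columns and a bijection between them);
\item $S$ inside a $2\times2$ rectangle $\{i,i'\}\times\{j,\ell\}$, contributing $\binom{a}{2}\binom{r}{2}\binom{4}{k}$.
\end{enumerate}
Sufficiency is direct. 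Types 1–3 have no bad triple. In a rectangle, each vertex outside a same-line pair shares a line with one vertex of that pair and so is not adjacent to both. For disjointness, a rectangle subset of size at least $3$ has both a repeated row and a repeated column, so it is of no other type, and it determines its rectangle uniquely.

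The main obstacle is necessity: showing that any $S$ outside types 1–3 lies in a rectangle. Such an $S$ has, say, two vertices $i_j,i_\ell$ in a common row together with a vertex outside row $i$.
\begin{enumerate}
\item Every vertex of $S$ outside row $i$ must lie in column $j$ or column $\ell$, otherwise it is adjacent to both $i_j$ and $i_\ell$.
\item Row $i$ contains no third vertex $i_m$. Otherwise the outside vertex would have to lie in column $j$ or $\ell$, in column $j$ or $m$, and in column $\ell$ or $m$, which is impossible.
\item All outside vertices lie in a single row $i'$. Two outside vertices in distinct rows and in the same column, together with the row-$i$ vertex of that column, would put three vertices of $S$ in one column; the transposed form of step 2 rules this out. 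Two outside vertices in distinct rows and in different columns, say $i'_j$ and $i''_\ell$, give a bad triple: $i_j$ and $i'_j$ share column $j$, and $i''_\ell$ is adjacent to both.
\end{enumerate}
Hence $S\subseteq\{i,i'\}\times\{j,\ell\}$. Summing the four type counts gives the stated $\alpha_k$. The degree is $\max\{a,r,4\}$, the largest $k$ for which some type is nonempty.
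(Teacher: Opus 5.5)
Your case (iii) argument is correct and uses the same four-type classification as the paper: rows, columns, partial permutations, and $2\times 2$ rectangles. Your steps 1--3 for necessity supply details that the paper only sketches. The problems are both in case (ii).

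\emph{The explicit geodesic is wrong (easily fixed).} The path $1_j\sim 2_\ell\sim 1_\ell\sim 2_j$ does not exist, because $2_\ell$ and $1_\ell$ share column $\ell$ and so are not adjacent. The conclusion still holds, since $d(1_j,1_\ell)+d(1_\ell,2_j)=2+1=3=d(1_j,2_j)$. A geodesic realizing this is $1_j\sim 2_q\sim 1_\ell\sim 2_j$ with $q\notin\{j,\ell\}$, which exists because $m\ge 3$. Treating this matched-partner case at all is a point where you are more careful than the paper, which only handles a vertex $w\in Y$ adjacent to both $u$ and $v$.

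\emph{The case $m=2$ is a counterexample, not a hand check.} Since $m\ge 2$ automatically, this case is exactly $r=a=2$. Here $T^*(2,2)\cong K_{2,2}-M\cong 2K_2$. You use the convention in case (i) that a vertex is excluded only if it lies on an actual geodesic between two others. Under that convention all $16$ subsets of $2K_2$ are in general position, so
\[
\psi\bigl(T^*(2,2)\bigr)=(1+x)^4=1+4x+6x^2+4x^3+x^4\neq 1+4x+6x^2.
\]
If you instead used the formal rule $\infty=1+\infty$ to rule out these triples, case (i) would break. So part (ii) holds only for $m\ge 3$, which is precisely where your distance facts are valid: distance $2$ within a side, distance $3$ between matched partners. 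The paper's proof has the same blind spot, since it asserts $d(u,v)=2$ for $u,v\in X$. You should state the restriction $m\ge 3$ explicitly and record $T^*(2,2)$ as an exception, rather than claim it can be verified.
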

		
		\begin{proof}
			The values $\alpha_0=1$, $\alpha_1=ar$, and $\alpha_2=\binom{ar}{2}$ hold for every graph. 	If $\min\{r,a\}=1$, then every two distinct vertices agree either in the unique part-index or in the unique label-index, so no two vertices are adjacent. Thus, $T^*(r,a)$ is edgeless and $\psi(T^*(r,a))=(1+x)^{ar}$. Now, assume that $\min\{r,a\}=2$, and let $m=\max\{r,a\}$. Then $T^*(r,a)$ is isomorphic to $K_{m,m}$ with a perfect matching removed. Let $X$ and $Y$ be the two bipartition classes. Any subset of $X$ is a general position set, and any subset of $Y$ is a general position set. Conversely, if a set $S$ contains two vertices $u,v\in X$ and a vertex $w\in Y$, then $u$ and $v$ have distance $2$, and every vertex of $Y$ that is adjacent to both lies on a shortest $u$--$v$ path. As $|S| \ge 3$, this precludes $S$ from intersecting both sides. Hence, every general position set of size at least $3$ is contained entirely in one side, and therefore
			$ \alpha_k=2\binom{m}{k}$ for $k\ge 3.$ Assume that $r\ge 3$ and $a\ge 3$. For $i\in [a]$, let
			$R_i=\{ i_1,\dots,i_r \}$ be the $i$th part, and for $j\in [r]$, let $C_j=\{ 1_j,\dots,a_j \}$	be the $j$th label-class. For two vertices $i_j$ and $k_\ell$ we have: (i) $d(i_j,k_\ell)=1$ if $i\neq k$ and $j\neq \ell$; and (ii) $d(i_j,k_\ell)=2$ if $i=k$ and $j\neq \ell$, or if $i\neq k$ and $j=\ell$. Let $S$ be a general position set with $|S|\ge 3$.	If $S$ contains two vertices from the same row $R_i$ and also a vertex outside $R_i$, then that outside vertex is adjacent to both row-vertices unless it lies in one of the two corresponding columns. In such cases, we see that any further vertex outside the resulting $2\times 2$ rectangle creates a geodesic. Hence, either $S\subseteq R_i$, or all vertices of $S$ lie in a $2\times 2$ rectangle determined by two rows and two columns. By symmetry, either $S\subseteq C_j$, or all vertices of $S$ lie in a $2\times 2$ rectangle.
			 If $S$ contains no two vertices from the same row and no two from the same column, then every two vertices of $S$ are adjacent, so $S$ is a clique. Such a clique of size $k$ is obtained by choosing $k$ rows, $k$ columns, and a bijection between them, giving $k!\binom{a}{k}\binom{r}{k}$	possibilities. If $S$ lies in a $2\times 2$ rectangle, then every $k$-subset of the four rectangle-vertices is in general position for $k=3,4$. The number of such rectangles is
			$\binom{a}{2}\binom{r}{2},$	and each contributes $\binom{4}{k}$ sets of size $k$. Therefore, for every $k\ge 3$, the general position sets are exactly of the following four mutually exclusive types: (a)  subsets of a single row $R_i$, counted by $a\binom{r}{k}$, (b)  subsets of a single column $C_j$, counted by $r\binom{a}{k}$, (c)   cliques of size $k$, counted by $k!\binom{a}{k}\binom{r}{k}$, (c) and   $k$-subsets of $2\times 2$ rectangles, counted by $\binom{a}{2}\binom{r}{2}\binom{4}{k}$. Summing these contributions, we obtain
			$$
			\alpha_k = a\binom{r}{k} + 	r\binom{a}{k} + 	k!\binom{a}{k}\binom{r}{k} + \binom{a}{2}\binom{r}{2}\binom{4}{k}.
			$$
			In particular, we have
			$$
			\operatorname{gp}\bigl(T^*(r,a)\bigr)=
			\begin{cases}
				ar, & \min\{r,a\}=1,\\
				\max\{r,a\}, & \min\{r,a\}=2,\\
				\max\{r,a,4\}, & r\ge 3,\ a\ge 3.
			\end{cases}
			$$
		\end{proof}
		
		\begin{remark}\label{rem:old-vs-new-Tstar}
			The formula previously stated in \cite{0}
			$$
			\psi_{\mathrm{old}}\bigl(T^*(r,a)\bigr)
			=
			1+arx+\binom{ar}{2}x^2
			+2a(a-1)\binom{r}{2}x^3
			+\binom{a}{2}\binom{r}{2}x^4
			+\sum_{k\ge 3}\left(a\binom{r}{k}+r^k\binom{a}{k}\right)x^k.
			$$
			For $r,a\ge 3$, this is not correct. The error is already visible in the coefficient of $x^3$. The old formula gives
			$$
			[x^3]\psi_{\mathrm{old}}\bigl(T^*(r,a)\bigr)
			=
			2a(a-1)\binom{r}{2}
			+
			a\binom{r}{3}
			+
			r^3\binom{a}{3},
			$$
			whereas the correct coefficient is
			$$
			[x^3]\psi\bigl(T^*(r,a)\bigr)
			=
			4\binom{a}{2}\binom{r}{2}
			+
			a\binom{r}{3}
			+
			r\binom{a}{3}
			+
			6\binom{a}{3}\binom{r}{3}.
			$$
			The old term $r^3\binom{a}{3}$ overcounts the clique-type contribution. The correct clique contribution is
			$
			r\binom{a}{3}+6\binom{a}{3}\binom{r}{3},
			$	not $r^3\binom{a}{3}$.
		\end{remark}
		
		\begin{example}
			As $T^*(3,2)\cong C_6$, we find its GPA by using definition. Let $ V(C_6)=\{v_1,v_2,v_3,v_4,v_5,v_6\}, $	where the cycle is $	v_1v_2v_3v_4v_5v_6v_1. $ By definition, $
			\psi(C_6)=\sum_{k\ge 0}\alpha_k x^k,	$	where $\alpha_k$ is the number of general position sets of size $k$.
			 Trivially, $\alpha_0=1, \alpha_1=6, \alpha_2=\binom{6}{2}=15,$ since every set of size at most $2$ is in general position.	Now, consider $3$-subsets. 	A $3$-set $S=\{x,y,z\}$ is in general position if no one of its vertices lies on a shortest path between the other two. In $C_6$, the only $3$-subsets with this property are the two alternating sets
			$	\{v_1,v_3,v_5\}$ and $ \{v_2,v_4,v_6\}.	$
			Indeed, in each of these sets every pair of vertices is at distance $2$, so no vertex lies on a geodesic between the other two.
			 On the other hand, any other $3$-subset contains two vertices at distance $1$ or $2$ and a third vertex that lies on a shortest path between them. For instance, the set $\{v_1,v_2,v_4\}$ is not in general position, as $d(v_1,v_4)=3=d(v_1,v_2)+d(v_2,v_4)=1+2.$	Similarly, $\{v_1,v_3,v_6\}$ is not in general position, since
			$d(v_3,v_6)=3=d(v_3,v_1)+d(v_1,v_6)=2+1.$ Hence, $\alpha_3=2.$ Finally, no set of size at least $4$ can be in general position in $C_6$, as any $4$-subset contains three vertices among which one lies on a shortest path between the other two. Thus,  $	\alpha_k=0 $ for all $k\ge 4.$	So, we have	$	\psi\bigl(T^*(3,2)\bigr)=\psi(C_6)=1+6x+15x^2+2x^3,$ which validates Theorem \ref{thm:correct-gpa-Tstar}.	By contrast, the old formula gives
			$$
			\psi_{\mathrm{old}}\bigl(T^*(3,2)\bigr)
			=
			1+6x+15x^2+14x^3+3x^4.
			$$
			Which is already false in this smallest nontrivial case.
		 Similarly, for  $T^*(3,3)$, Theorem~\ref{thm:correct-gpa-Tstar} implies that 
		 $$
			\alpha_3
			=
			3\binom{3}{3}
			+
			3\binom{3}{3}
			+
			3!\binom{3}{3}\binom{3}{3}
			+
			\binom{3}{2}\binom{3}{2}\binom{4}{3}
			=
			3+3+6+36=48,
			$$
			and $\alpha_4=\binom{3}{2}\binom{3}{2}=9.
			$
			Therefore, $\psi\bigl(T^*(3,3)\bigr)=1+9x+36x^2+48x^3+9x^4$ while  $	\psi_{\mathrm{old}}\bigl(T^*(3,3)\bigr)=1+9x+36x^2+66x^3+9x^4$ is not correct as  coefficient of $x^3$ is overcounted by $18$.
		 
		 Consider one more case $T^*(11,3)$,  we have
			$$
			\alpha_3
			=
			3\binom{11}{3}
			+
			11\binom{3}{3}
			+
			3!\binom{3}{3}\binom{11}{3}
			+
			\binom{3}{2}\binom{11}{2}\binom{4}{3}.
			$$
			Thus $	\alpha_3	=	3\cdot 165+11+6\cdot 165+3\cdot 55\cdot 4=2156, $  $\alpha_4=3\binom{11}{4}	+		\binom{3}{2}\binom{11}{2}  =1155,$ 
			and for $5\le k\le 11$, $ 	\alpha_k=3\binom{11}{k}. $ Hence, by Theorem \ref{thm:correct-gpa-Tstar}, we have
			 \begin{align*}
			 	\psi\bigl(T^*(11,3)\bigr) =& 1+33x+528x^2+2156x^3+1155x^4+1386x^5+1386x^6+990x^7+495x^8\\
			 	&+165x^9+33x^{10}+3x^{11}.
			 \end{align*} 
			The previous formula form \cite{0} gives, 
			 \begin{align*}
			 		\psi_{\mathrm{old}}\bigl(T^*(11,3)\bigr)= & 1+33x+528x^2+2486x^3+1155x^4+1386x^5+1386x^6+990x^7+495x^8\\
			 		&+165x^9+33x^{10}+3x^{11}.
			 \end{align*} 
			So again the discrepancy occurs in the coefficient of $x^3$, where the old formula exceeds the correct value by
			$ 2486-2156=330. $
		\end{example}

			In the next result, we discuss the unimodality and log-concave property of $\psi\bigl(T^*(r,a)\bigr).$
		\begin{theorem}\label{thm:Tstar-shape}
			Let $\psi\bigl(T^*(r,a)\bigr)=\sum_{k\ge 0}\alpha_kx^k$	be the  GPA of $T^*(r,a)$ as in Theorem~\ref{thm:correct-gpa-Tstar}. Then the following hold. 
			\begin{enumerate}[label=\textup{(\roman*)}]
				\item If $\min\{r,a\}=1$, then $\psi\bigl(T^*(r,a)\bigr)=(1+x)^{ar},$ hence the coefficient sequence is real-rooted, log-concave, and unimodal.
				
				\item If $\min\{r,a\}=2$, and $m=\max\{r,a\}$, then
				$$
				\psi\bigl(T^*(r,a)\bigr)
				=
				1+2mx+\binom{2m}{2}x^2+2\sum_{k=3}^{m}\binom{m}{k}x^k.
				$$
				In this case the coefficient sequence is: log-concave if and only if $m\le 4$,  unimodal for all $m\neq 8$, and   not unimodal for $m=8$.
				
				\item If $r\ge 3$ and $a\ge 3$, then there is no uniform behavior in general, as the family $T^*(r,a)$ contains examples whose  GPAs are log-concave and unimodal,  unimodal but not log-concave, and  neither unimodal nor log-concave. In particular, all three phenomena already occur in the subfamily $T^*(r,3)$.
			\end{enumerate}
		\end{theorem}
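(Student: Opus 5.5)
For part (i) I will simply invoke Theorem~\ref{thm:correct-gpa-Tstar}(i). The polynomial $(1+x)^{ar}$ has only real roots, so its coefficients $\binom{ar}{k}$ are log-concave with no internal zeros, and hence unimodal.

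For part (ii) I will work with the sequence $\alpha_0=1$, $\alpha_1=2m$, $\alpha_2=\binom{2m}{2}$, $\alpha_k=2\binom{m}{k}$ for $3\le k\le m$, taken from Theorem~\ref{thm:correct-gpa-Tstar}(ii). I will follow the pattern of the proof of Theorem~\ref{thm:comb-unimodal-correct}.
\begin{enumerate}
\item For $k\ge 4$ the triple $(\alpha_{k-1},\alpha_k,\alpha_{k+1})$ lies in the binomial regime, so log-concavity holds there.
\item At $k=1$ we have $4m^2\ge\binom{2m}{2}$. At $k=2$ I will expand $\binom{2m}{2}^2-4m\binom{m}{3}$ and check it is nonnegative, since it is dominated by $m^4$.
\item The decisive index is $k=3$. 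There I will factor
\[
4\binom{m}{3}^2-2\binom{2m}{2}\binom{m}{4}
=\binom{m}{3}\cdot\frac{m(m-3)}{6}\Bigl(4(m-1)(m-2)-3(2m-1)(m-3)\Bigr),
\]
using $\binom{m}{4}=\binom{m}{3}\frac{m-3}{4}$. The last bracket equals $-2m^2+9m-1$.
\item For $m\ge 5$ this bracket is negative (its value at $m=5$ is $-6$, and it is decreasing for $m\ge 3$), so log-concavity fails at $k=3$. For $m=4$ the $k=3$ difference vanishes, since $\binom{m}{4}(m-3)$ factors are absorbed directly: $32\ge 28$. For $m=3$ we have $\alpha_4=0$. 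This gives log-concavity if and only if $m\le4$.
\end{enumerate}

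For unimodality in part (ii), I note that $(\alpha_k)_{k\ge3}$ is unimodal, being twice a binomial row, and that $\alpha_0<\alpha_1<\alpha_2$. Therefore non-unimodality occurs exactly when $\alpha_2>\alpha_3<\alpha_4$.
\begin{itemize}
\item The condition $\alpha_3<\alpha_4$ means $\binom{m}{3}<\binom{m}{4}$, that is, $m\ge 8$.
\item The condition $\alpha_2>\alpha_3$ means $3(2m-1)>(m-1)(m-2)$, that is, $m^2-9m+5<0$, which holds exactly for $m\le 8$.
\end{itemize}
Hence only $m=8$ fails. For $m\ge9$ we get $\alpha_2\le\alpha_3$, and the sequence then rises to the binomial peak and falls. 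For $m\le7$ it falls from index $2$ onward. This matches $K_{8,8}-M$ against the $K_{8,8}$ example earlier in the paper.

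For part (iii) it suffices to exhibit three members of $T^*(r,3)$ using the formula of Theorem~\ref{thm:correct-gpa-Tstar}(iii).
\begin{itemize}
\item $T^*(3,3)$ gives $1,9,36,48,9$, which is log-concave by direct check of the three inequalities.
\item $T^*(11,3)$, already computed in the example above, has $\alpha_2=528>\alpha_3=2156$? No: the relevant pattern is $\alpha_3=2156>\alpha_4=1155<\alpha_5=1386$. It is therefore neither unimodal nor log-concave.
\item For the middle phenomenon I will take $T^*(8,3)$. This gives $\alpha_3=3\cdot56+8+6\cdot56+3\cdot28\cdot4=848$, $\alpha_4=3\cdot70+84=294$, $\alpha_5=168$, $\alpha_6=84$, $\alpha_7=24$, $\alpha_8=3$. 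The sequence $1,24,276,848,294,168,\dots$ is unimodal, but $294^2<848\cdot168$, so it is not log-concave.
\end{itemize}

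The main obstacle is the exact threshold analysis in (ii): the factorisation at $k=3$ and the two-sided inequality that isolates $m=8$. Part (iii) is just arithmetic once suitable $r$ are found.
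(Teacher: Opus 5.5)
Your proposal is correct and follows the paper's route. Part (ii) checks log-concavity only at $k=1,2,3$, since the tail is $2\binom{m}{k}$, and compares $\alpha_2,\alpha_3,\alpha_4$ for unimodality; part (iii) exhibits members of $T^*(r,3)$. Two differences are cosmetic. Reducing non-unimodality to the single pattern $\alpha_2>\alpha_3<\alpha_4$ is slightly tidier than the paper's split into $m\le 7$, $m=8$, $m\ge 9$. Your witnesses $T^*(3,3)$, $T^*(8,3)$, $T^*(11,3)$ differ from the paper's $T^*(5,3)$, $T^*(7,3)$, $T^*(11,3)$ but agree with Corollary~\ref{cor:Tstar-r3}.

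A few slips should be repaired, none of which changes a conclusion.

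\begin{enumerate}
\item The displayed identity at $k=3$ carries a spurious factor $(m-3)$. The correct form is
\[
4\binom{m}{3}^2-2\binom{2m}{2}\binom{m}{4}=\binom{m}{3}\cdot\frac{m}{6}\bigl(-2m^2+9m-1\bigr).
\]
At $m=5$ this gives $-50$, while your expression gives $-100$. The sign conclusion for $m\ge 5$ survives, because $m-3>0$ there.

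\item Your remark that the $k=3$ difference vanishes at $m=4$ is false: it equals $64-56=8$. Your check $32\ge 28$ is just this inequality halved. The bracket $-2m^2+9m-1$ equals $3>0$ at $m=4$, so the corrected identity settles $m=4$ directly.

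\item At $k=2$, replace ``dominated by $m^4$'' with the exact value $\frac{m^2}{3}(10m^2-6m-1)$, which is positive for all $m\ge 1$.

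\item In (iii), delete the stray ``$\alpha_2=528>\alpha_3=2156$?'' line.
\end{enumerate}
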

		
		\begin{proof}
			For $\min\{r,a\}=1$, the graph $T^*(r,a)$ is edgeless on $ar$ vertices, so $\psi\bigl(T^*(r,a)\bigr)=(1+x)^{ar}.$
			Hence, its coefficients are binomial coefficients, and therefore log-concave and unimodal. Now, assume $\min\{r,a\}=2$, and write $m=\max\{r,a\}$. So, by Theorem~\ref{thm:correct-gpa-Tstar}, we have
			$\alpha_0=1, \alpha_1=2m, \alpha_2=\binom{2m}{2}=m(2m-1),$	and $\alpha_k=2\binom{m}{k}\qquad (k\ge 3).$  For $k\ge 4$, the tail $(\alpha_k)_{k\ge 3}$ is a positive scalar multiple of the binomial sequence, hence it is log-concave. Therefore, only the indices $k=1,2,3$ need to be checked. A direct computation gives
			$$
			\alpha_1^2-\alpha_0\alpha_2
			=
			(2m)^2-\binom{2m}{2}
			=
			m(2m+1)>0,
			$$
			$$
			\alpha_2^2-\alpha_1\alpha_3
			=
			\bigl(m(2m-1)\bigr)^2-(2m)\cdot 2\binom{m}{3}
			=
			\frac{m^2}{3}\bigl(10m^2-6m-1\bigr)>0,
			$$
			and
			$$
			\alpha_3^2-\alpha_2\alpha_4
			=
			\left(2\binom{m}{3}\right)^2-m(2m-1)\cdot 2\binom{m}{4}
			=
			-\frac{m^2(m-1)(m-2)}{36}\bigl(2m^2-9m+1\bigr).
			$$
			Hence $\alpha_3^2\ge \alpha_2\alpha_4$ if and only if $2m^2-9m+1\le 0$, that is, if and only if $m\le 4$.  For unimodality, we note that 
			$$
			\alpha_2-\alpha_3
			=
			m(2m-1)-2\binom{m}{3}
			=
			-\frac{m}{3}\bigl(m^2-9m+5\bigr).
			$$
			Thus, $\alpha_2\ge \alpha_3$ for $m\le 8$, while $\alpha_2<\alpha_3$ for $m\ge 9$. If $2\le m\le 7$, then the binomial tail $2\binom{m}{k}$ is nonincreasing for $k\ge 3$, since the binomial coefficients of order $m$ reach their maximum at $k\le 3$. Since also $\alpha_2\ge \alpha_3$, we get $	\alpha_0<\alpha_1<\alpha_2\ge \alpha_3\ge \alpha_4\ge \cdots,
			$ and the sequence is unimodal. If $m=8$, then
			$$
			\psi\bigl(T^*(8,2)\bigr)=1+16x+120x^2+112x^3+140x^4+112x^5+56x^6+16x^7+2x^8,
			$$
			whose coefficients satisfy $120>112<140,$ so the sequence is not unimodal. If $m\ge 9$, then $\alpha_2<\alpha_3$, and the tail $(2\binom{m}{k})_{k\ge 3}$ is itself unimodal. Since, $\alpha_1<\alpha_2<\alpha_3$, the whole coefficient sequence is unimodal.  Finally, for $r\ge 3$ and $a\ge 3$,  Theorem~\ref{thm:correct-gpa-Tstar} shows that the coefficient sequence is a sum of four different contributions 
			$$
			\alpha_k
			=
			a\binom{r}{k}
			+
			r\binom{a}{k}
			+
			k!\binom{a}{k}\binom{r}{k}
			+
			\binom{a}{2}\binom{r}{2}\binom{4}{k}
			\qquad (k\ge 3).
			$$
			These, contributions interact in different ways for different pairs $(r,a)$, and this leads to all three types of behavior listed in part~(iii). 
		\end{proof}
		
		Since in (iii), for different pairs $(r,a)$, and this leads to all three types of behavior for the sequence in $\psi\bigl(T^*(r,a)\bigr)$. The following consequences gives concrete examples.
		
		\begin{corollary}\label{cor:Tstar-r3}
			For the subfamily $T^*(r,3)$ with $r\ge 3$, we have
			{\footnotesize$$
			\psi\bigl(T^*(r,3)\bigr)
			=
			1+3rx+\binom{3r}{2}x^2
			+\left(9\binom{r}{3}+12\binom{r}{2}+r\right)x^3
			+\left(3\binom{r}{4}+3\binom{r}{2}\right)x^4
			+\sum_{k=5}^{r}3\binom{r}{k}x^k.
			$$}
			Also,  $\psi\bigl(T^*(r,3)\bigr)$ is log-concave for $3\le r\le 6$,   $\psi\bigl(T^*(r,3)\bigr)$ is not log-concave for $r\ge 7$,   $\psi\bigl(T^*(r,3)\bigr)$ is unimodal for $3\le r\le 10$ and also for $r\ge 18$, and   $\psi\bigl(T^*(r,3)\bigr)$ is not unimodal for $11\le r\le 17$. 
		\end{corollary}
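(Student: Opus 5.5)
The plan is to specialize Theorem~\ref{thm:correct-gpa-Tstar}(iii) to $a=3$, and then reduce each shape claim to a small number of polynomial inequalities in $r$. The binomial tail takes care of everything else.

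\textbf{The formula.} Put $a=3$ in $\alpha_k=a\binom{r}{k}+r\binom{a}{k}+k!\binom{a}{k}\binom{r}{k}+\binom{a}{2}\binom{r}{2}\binom{4}{k}$.
\begin{itemize}
\item For $k=3$: the four terms are $3\binom r3$, $r$, $6\binom r3$ and $12\binom r2$, so $\alpha_3=9\binom r3+12\binom r2+r$.
\item For $k=4$: $\binom{3}{4}=0$ kills the second and third terms, and the rectangle term is $3\binom r2$, so $\alpha_4=3\binom r4+3\binom r2$.
\item For $k\ge5$: only $3\binom rk$ survives.
\end{itemize}
This gives the displayed polynomial. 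Its degree is $\max\{r,4\}$, so for $r\ge 3$ every coefficient up to the degree is positive and there are no internal zeros.

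\textbf{Log-concavity.} As in the proof of Theorem~\ref{thm:Tstar-shape}(ii), the tail $(3\binom rk)_{k\ge5}$ is log-concave, so only the indices $k=1,2,3,4,5$ need checking.
\begin{itemize}
\item For $k=1$ and $k=2$, I will expand $\alpha_1^2-\alpha_0\alpha_2$ and $\alpha_2^2-\alpha_1\alpha_3$ as polynomials in $r$. The squared term has the larger degree (or the larger leading coefficient), so both should be positive for $r\ge3$. I will confirm this by factoring out $r^2$ and bounding the remaining quadratic, as in Theorem~\ref{thm:balanced-small-r}.
\item The decisive index is $k=4$. Here $\alpha_4^2\sim r^8/64$, while $\alpha_3\alpha_5\sim\frac{3r^3}{2}\cdot\frac{r^5}{40}=\frac{3r^8}{80}$. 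So $\alpha_4^2-\alpha_3\alpha_5$ is a degree-8 polynomial with negative leading coefficient.
\item I will compute it exactly, factor out the obvious roots coming from $\binom r5$, namely $r(r-1)(r-2)(r-3)(r-4)$, and study the residual cubic. The aim is to show it is nonnegative for $r\le 6$ and negative for $r\ge7$. I expect to verify $r=3,\dots,7$ directly and use a monotonicity bound for $r\ge7$.
\item Separately, I must check $k=3$ ($\alpha_3^2\ge\alpha_2\alpha_4$) and $k=5$ ($9\binom r5^2\ge\alpha_4\cdot 3\binom r6$) for $3\le r\le6$. For $r\le 6$ these are finitely many numerical checks. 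For $k=5$ with $r\le5$, $\alpha_6=0$, so the inequality is trivial.
\end{itemize}

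\textbf{Unimodality.} For $r\ge 3$ we have $\alpha_0<\alpha_1<\alpha_2$. The tail $(3\binom rk)_{k\ge5}$ is unimodal, with peak at $\lfloor r/2\rfloor$ when $r\ge10$, and nonincreasing from $k=5$ when $r\le 9$. So a failure of unimodality can only be a dip at $k=3$ or $k=4$. The sequence is non-unimodal exactly when
\begin{itemize}
\item $\alpha_2>\alpha_3<\alpha_4$ (or similarly, with the tail rising after $k=3$), or
\item $\alpha_3>\alpha_4<\alpha_5$.
\end{itemize}
I will write $\alpha_3-\alpha_4$, $\alpha_5-\alpha_4$ and $\alpha_2-\alpha_3$ as explicit polynomials in $r$ and determine their sign changes:
\begin{itemize}
\item $\alpha_3-\alpha_4$ is a quartic with negative leading term $-r^4/8$, so it is positive for $r$ up to some point and negative for $r\ge18$.
\item $\alpha_5-\alpha_4=3\binom r5-3\binom r4-3\binom r2$ becomes positive from about $r=11$ on.
\end{itemize}
Locating these crossings exactly is what identifies the window $11\le r\le17$, where $\alpha_3>\alpha_4<\alpha_5$. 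For $r\ge18$, $\alpha_2<\alpha_3\le\alpha_4\le\alpha_5\le\cdots$ up to the binomial peak, and the sequence is nonincreasing afterwards. For $3\le r\le10$, either the tail is already decreasing at $k=5$ or $\alpha_4\ge\alpha_5$, so the sequence decreases after its peak at $k=2$ or $k=3$.

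\textbf{Main obstacle.} The hard part is pinning down the exact thresholds $7$, $11$ and $18$. This needs the precise integer roots of the degree-8 log-concavity polynomial at $k=4$ and of the quartics $\alpha_3-\alpha_4$ and $\alpha_5-\alpha_4$. I plan to handle each by explicit evaluation at the boundary values ($r=6,7$; $r=10,11$; $r=17,18$), together with a derivative or dominant-term bound showing that the sign does not change again beyond the boundary.
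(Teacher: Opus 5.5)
Your route is the paper's. You specialize Theorem~\ref{thm:correct-gpa-Tstar}(iii) to $a=3$ and let the binomial tail dispose of all but a few indices. Log-concavity is then decided by the sign of $\alpha_4^2-\alpha_3\alpha_5$, and unimodality by the signs of consecutive differences.

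\textbf{A step that fails as written.} You cannot factor $r(r-1)(r-2)(r-3)(r-4)$ out of $\alpha_4^2-\alpha_3\alpha_5$. Since $\alpha_4=\frac{r(r-1)(r^2-5r+18)}{8}$ does not vanish at $r=2,3,4$, only $r^2(r-1)$ is common to both products. In fact
\[
\alpha_4^2-\alpha_3\alpha_5=-\frac{r^2(r-1)}{320}\bigl(7r^5-41r^4-167r^3+1373r^2-3224r+2004\bigr),
\]
so the residual is a quintic, not a cubic. Your fallback still works. The quintic equals $-1440,-2940,-4416,-2688$ at $r=3,4,5,6$ and $8640$ at $r=7$. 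Its derivative $35r^4-164r^3-501r^2+2746r-3224$ is positive for $r\ge 7$, which is exactly how the paper concludes.

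\textbf{Index $k=5$.} Your separate check here is more careful than the paper. Its proof goes from the binomial tail straight to ``log-concavity can fail only at $k=4$'', although the triple $(\alpha_4,\alpha_5,\alpha_6)$ is not covered by the tail. This matters only at $r=6$, where $18^2=324\ge 90\cdot 3=270$.

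\textbf{Unimodality.} Note that $\alpha_3-\alpha_2=\frac r2(3r^2-6r-1)>0$. So the peak is never at $k=2$, and the configuration $\alpha_2>\alpha_3$ you list never arises. The two differences that matter are
\[
\alpha_3-\alpha_4=-\frac r8\bigl(r^3-18r^2+11r-2\bigr),\qquad
\alpha_4-\alpha_5=-\frac{r(r-1)}{40}\bigl(r^3-14r^2+51r-114\bigr).
\]
Each cubic has a negative local maximum, hence a single real root, lying in $(17,18)$ and $(10,11)$ respectively. This fixes the sign on the whole ranges $3\le r\le 17$ and $3\le r\le 10$, not just beyond the boundary. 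Your plan should say this, or evaluate all the finitely many small cases, rather than only testing the endpoints.
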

		
		\begin{proof}
			As $
			\alpha_0=1, \alpha_1=3r,  \alpha_2=\binom{3r}{2}, \alpha_3=9\binom{r}{3}+12\binom{r}{2}+r,		\alpha_4=3\binom{r}{4}+3\binom{r}{2}, $	and	$\alpha_k=3\binom{r}{k}$, for $k\geq 5.$ So, for $k\ge 5$, the tail is a scalar multiple of the binomial sequence, and hence log-concave. We now inspect the first few indices by direct computation, we have $\alpha_1^2-\alpha_0\alpha_2=\tfrac{3r(3r+1)}{2}>0,$ $\alpha_2^2-\alpha_1\alpha_3
			=	\tfrac{3r^2}{4}\bigl(21r^2-24r+11\bigr)>0,$ 	and
			$$
			\alpha_3^2-\alpha_2\alpha_4
			=
			\frac{r^2}{16}\bigl(27r^4+129r^3-285r^2+135r+10\bigr)>0.
			$$
			Thus log-concavity can fail only at $k=4$. Now
			$$
			\alpha_4^2-\alpha_3\alpha_5
			=
			-\frac{r^2(r-1)}{320}
			\Bigl(7r^5-41r^4-167r^3+1373r^2-3224r+2004\Bigr).
			$$
			For $r=3,4,5,6$, the bracket is negative, hence $\alpha_4^2-\alpha_3\alpha_5>0$. For $r\ge 7$, the bracket is positive, in fact  it equals $8640$ at $r=7$, and its derivative $35r^4-164r^3-501r^2+2746r-3224$ is positive for all $r\ge 7$. Hence, $\alpha_4^2-\alpha_3\alpha_5<0$ for all $r\ge 7$. This proves the log-concavity assertions.
			
			For unimodality, we compare the neighboring coefficients. First, note that $\alpha_3-\alpha_2=\tfrac{r}{2}\bigl(3r^2-6r-1\bigr)>0$ for $r\ge 3$, so the sequence is increasing up to degree $3$.
			 Next, $	\alpha_3-\alpha_4=-\tfrac{r}{8}\bigl(r^3-18r^2+11r-2\bigr).$ The cubic $r^3-18r^2+11r-2$ is negative for $3\le r\le 17$ and positive for $r\ge 18$, so $\alpha_3>\alpha_4$ for  $3\le r\le 17,$ and $\alpha_3\le \alpha_4$ for $r\ge 18.$			
			Also, $\alpha_4-\alpha_5=-\tfrac{r(r-1)}{40}\bigl(r^3-14r^2+51r-114\bigr).$ The cubic $r^3-14r^2+51r-114$ is negative for $3\le r\le 10$ and positive for $r\ge 11$, hence $\alpha_4>\alpha_5$ for $3\le r\le 10,$ $\alpha_4<\alpha_5$  for $r\ge 11.$ 	
			Therefore, (i)   if $3\le r\le 10$, then $\alpha_0<\alpha_1<\alpha_2<\alpha_3>\alpha_4\ge \alpha_5\ge \cdots,$ 	so the sequence is unimodal, (ii)  if $11\le r\le 17$, then $\alpha_0<\alpha_1<\alpha_2<\alpha_3>\alpha_4<\alpha_5,$	so the sequence is not unimodal, and (iii)   if $r\ge 18$, then $\alpha_0<\alpha_1<\alpha_2<\alpha_3\le \alpha_4\le \alpha_5, $	and from degree $5$ onward the coefficients are the binomial tail $3\binom{r}{k}$, which is unimodal. Hence, the full sequence is again unimodal.
		\end{proof}
		We illustrate by an example
		\begin{example}
			  For $T^*(4,2)$, we have $	\psi\bigl(T^*(4,2)\bigr)=1+8x+28x^2+8x^3+2x^4,$ which is log-concave and unimodal.
				 For $T^*(5,3)$, we have  $\psi\bigl(T^*(5,3)\bigr)=1+15x+105x^2+215x^3+45x^4+3x^5,$ and  its coefficient sequence is also log-concave and unimodal. For $T^*(5,2)$, we have $\psi\bigl(T^*(5,2)\bigr)=1+10x+45x^2+20x^3+10x^4+2x^5, $  which is unimodal, but not log-concave, since $20^2=400<45\cdot 10=450.$  For $T^*(7,3)$, we have  $
				\psi\bigl(T^*(7,3)\bigr)=1+21x+210x^2+574x^3+168x^4+63x^5+21x^6+3x^7,$  which  is unimodal, but not log-concave,  as $168^2=28224<574\cdot 63=36162.$ For $T^*(8,2)$, we have
				$$
				\psi\bigl(T^*(8,2)\bigr)=1+16x+120x^2+112x^3+140x^4+112x^5+56x^6+16x^7+2x^8,
				$$
				whose coefficient sequence is not unimodal, as  $120>112<140,$  it is also not log-concave, since $112^2=12544<120\cdot 140=16800.$  For $T^*(11,3)$, we have
				\begin{align*}
				 \psi\bigl(T^*(11,3)\bigr) =&1+33x+528x^2+2156x^3+1155x^4+1386x^5+1386x^6+990x^7+495x^8\\
				 &+165x^9+33x^{10}+3x^{11}.
				\end{align*}
				The coefficient sequence is neither unimodal nor log-concave, as  $2156>1155<1386$ and $1155^2=1334025<2156\cdot 1386=2988216.$
		\end{example}
	
	\section{Unimodality of Corona of general position polynomial}\label{section 6}
	
	We recall the following basic facts from \cite{0}.
	  If $G_1,\dots,G_r$ are graphs, then for the disjoint union
		$G_1\cup\cdots\cup G_r$ we have
		\[
		\psi(G_1\cup\cdots\cup G_r)=\psi(G_1)\cdots\psi(G_r),
		\]		
		\begin{theorem}\rm \cite[Theorem~4.1]{0}
			If $G_n$ is a comb with $n\ge 1$, then $\psi(G_n)$ is unimodal.
		\end{theorem}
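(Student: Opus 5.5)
The plan is to reuse the coefficient description of $\psi(G_n)$ from Theorem~\ref{thm:comb-unimodal-correct}: $\alpha_0=1$, $\alpha_1=2n$, $\alpha_2=\binom{2n}{2}$, and $\alpha_k=4\binom{n}{k}$ for $3\le k\le n$. That theorem already gives log-concavity of $(\alpha_k)_{k=0}^n$. So unimodality will follow once we confirm that the sequence has no internal zeros, that is, that every $\alpha_k$ with $0\le k\le \deg\psi(G_n)$ is positive.

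\textbf{Small cases.} For $n\le 2$ the comb is a path ($G_1=P_2$, $G_2=P_4$). Then $\psi(G_n)$ has degree at most $2$ with positive coefficients $1,2n,\binom{2n}{2}$. The sequence is increasing, so it is unimodal.

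\textbf{General case.} For $n\ge 3$, each coefficient $\alpha_k$ with $0\le k\le n$ is positive, because $\binom{n}{k}>0$ in this range. Hence there are no internal zeros. A log-concave sequence of positive terms has nonincreasing ratios $\alpha_{k+1}/\alpha_k$. Taking $m$ to be the last index with $\alpha_m/\alpha_{m-1}\ge 1$ gives $\alpha_0\le\dots\le\alpha_m\ge\dots\ge\alpha_n$, which is unimodality.

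\textbf{Optional explicit peak.} One can also locate the mode directly without log-concavity. The differences $\alpha_1-\alpha_0$ and $\alpha_2-\alpha_1=n(2n-3)$ are nonnegative for $n\ge 2$. After that, the tail $4\binom{n}{k}$ is unimodal with peak at $\lfloor n/2\rfloor$. One then compares $\alpha_2$ with $\alpha_3=\tfrac23 n(n-1)(n-2)$ to see whether the peak is at $2$ or further right. In either case no descent is followed by an ascent, because the only junction between the two regimes is at $k=2,3$, and log-concavity at $k=2,3$ was already verified.

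\textbf{Main obstacle.} The main point needing care is the junction at indices $2,3,4$, where the coefficients switch from $\binom{2n}{2}$ to the binomial tail. This is exactly where a non-unimodal dip could occur, as in the broom and $K_{8,8}$ examples. It is settled by the identities $\alpha_2^2-\alpha_1\alpha_3=\tfrac{n^2}{3}(8n^2-5)$ and $\alpha_3^2-\alpha_2\alpha_4=\tfrac{n^2(n-1)(n-2)}{18}(2n^2-3n+7)$ from the proof of Theorem~\ref{thm:comb-unimodal-correct}. Both expressions are nonnegative.
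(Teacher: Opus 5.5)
Your proposal is correct and follows essentially the paper's route: the paper gets unimodality of $\psi(G_n)$ from Theorem~\ref{thm:comb-unimodal-correct}, using the formula $\alpha_k=4\binom{n}{k}$ for $k\ge 3$, log-concavity via the two identities you quote together with the binomial tail, and positivity of all coefficients (no internal zeros). One slip in your small case: for $n=1$ the sequence $(1,2,1)$ is not increasing, though it is still unimodal (indeed log-concave), so your general argument already covers $n\le 2$ and the separate case is unnecessary.
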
 
	The corona \( G \circ K_1 \) of a graph \( G \) is obtained from \( G \) by attaching a pendent vertex to each the vertices of \( G \).
	We will consider the following problem from \cite[Problem 5.1]{0}
	\begin{problem}\label{problem 1}
		Assume that \( \psi(G) \) is unimodal. Then is \( \psi(G \circ K_1) \) also unimodal?
	\end{problem}
	
	We first give a class of graphs $G$ for which the implication, that is, if  $\psi(G)$ is unimodal, then so is $\psi(G\circ K_1)$.  We will prove these problems for some special cases. 
	
	\begin{theorem}\label{thm:corona-edgeless}
		Let $G\cong \overline{K}_n$ be an edgeless graph on $n$ vertices. Then the following hold.
		\begin{enumerate}
			\item Every subset of $V(G)$ is a general position set, hence $\psi(G) = (1+x)^n.$ In particular, $\psi(G)$ is real-rooted and therefore log-concave and unimodal.
			
			\item The corona $G\circ K_1$ is the disjoint union of $n$ copies of
			$K_2$, so
			\[
			\psi(G\circ K_1) = \bigl(\psi(K_2)\bigr)^n = (1+2x+x^2)^n.
			\]
			Thus, $\psi(G\circ K_1)$ is real-rooted and therefore log-concave and unimodal.
		\end{enumerate}
	\end{theorem}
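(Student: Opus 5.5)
The plan is to handle the two parts separately. Both reduce to the multiplicativity of $\psi$ over disjoint unions recalled at the start of this section, together with the standard fact that a real-rooted polynomial with nonnegative coefficients has a log-concave coefficient sequence with no internal zeros (Newton's inequalities, as already used in the proof of Theorem~\ref{prop:broom-unimodality-logconcavity-correct}(2)).

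For part (1), I will first argue directly from the definition, since $\overline{K}_n$ is disconnected and distances between distinct vertices are infinite. I adopt the convention of \cite{0}, under which $\psi(\overline K_n)=(1+x)^n$, i.e.\ no triple $x,y,z$ can satisfy $d(x,z)=d(x,y)+d(y,z)$ when no two of them are joined by a path. As a cleaner route I will instead write $\overline{K}_n$ as the disjoint union of $n$ copies of $K_1$. Since $\psi(K_1)=1+x$, multiplicativity gives $\psi(\overline K_n)=(1+x)^n$, so every subset is counted and hence every subset is a general position set. The polynomial $(1+x)^n$ has the single real root $-1$. Its coefficients $\binom{n}{k}$ are positive for $0\le k\le n$, so Newton's inequalities give log-concavity, and the absence of internal zeros then gives unimodality.

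For part (2), the key structural step is the observation that in $G\circ K_1$ the only edges are the pendant edges $v v'$, because $G$ itself has no edges. Thus $G\circ K_1$ is exactly $n$ vertex-disjoint copies of $K_2$. By Lemma~\ref{lem:basic-small}(1), or by the complete-graph formula $\psi(K_n)=(1+x)^n$ obtained from Theorem~\ref{thm:psi-multipartite}, we have $\psi(K_2)=1+2x+x^2=(1+x)^2$. Multiplicativity then yields $\psi(G\circ K_1)=(1+x)^{2n}$. This is again real-rooted with positive binomial coefficients $\binom{2n}{k}$, so the same argument gives log-concavity and unimodality.

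The only genuine subtlety is the interpretation of general position in a disconnected graph. The paper's definition is stated for connected graphs, so I must make explicit that the product formula from \cite{0} is being used as the definition of $\psi$ on disjoint unions. Equivalently, I will verify that no triple drawn from different components, or from a single $K_2$ (which has only two vertices), can violate general position. With that convention in place, the rest is immediate.
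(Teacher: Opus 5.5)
Your proposal is correct and matches the paper's proof: part (2) is the same argument (the corona of $\overline{K}_n$ is $n$ disjoint copies of $K_2$, $\psi(K_2)=(1+x)^2$, then multiplicativity over disjoint unions and real-rootedness). The only difference is cosmetic and in part (1). The paper argues directly that distinct vertices of $\overline{K}_n$ have no geodesic between them, so every subset is in general position, and then counts. Your preferred route through $\psi(K_1)^n$ relies on the multiplicativity formula, which already encodes that same convention, so the two are equivalent.
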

	
	\begin{proof}
		(1) If $G$ has no edges, any two distinct vertices $u,v$ are in different
		components and there is no path between them. In particular, for any triple
		$x,y,z$ of vertices there is no finite shortest $x$--$z$ path passing through
		$y$, hence every subset of $V(G)$ is in general position. The number of
		subsets of size $k$ is $\binom{n}{k}$, so we have
		\[
		\psi(G) = \sum_{k=0}^n \binom{n}{k}x^k = (1+x)^n.
		\]
		The polynomial $(1+x)^n$ possesses only one real root at $x=-1$, so confirming its status as real-rooted. Furthermore, its coefficient sequence $\left (\binom{n}{0},\dots,\binom{n}{n}\right )$ is recognized for being rigorously log-concave and unimodal.\\ (2) In the corona $G \circ K_1$, a single pendent vertex is affixed to each isolated vertex of $G$. Thus, $G\circ K_1$ is the disjoint union of $n$ copies of $K_2$.
		For a single edge $K_2$ we clearly have $\psi(K_2) = 1 + 2x + x^2,$ since every subset of vertices is in general position. So, from  disjoint union, we have
		\[
		\psi(G\circ K_1) = \bigl(\psi(K_2)\bigr)^n = (1+2x+x^2)^n.
		\]
		The polynomial \(1+2x+x^2 = (1+x)^2\) possesses a real root of multiplicity \(2\) at \(x=-1\), and its \(n\)-th power is real-rooted. Real-rootedness implies	log-concavity of the coefficient sequence, which in turn implies unimodality \cite{hardy}.
	\end{proof}
	
	Thus, from the above result, for edgeless graphs $G$, unimodality, and log-concavity of
	$\psi(G)$ is preserved under taking the corona $G\circ K_1$.
	
	 Another important family where the implication holds is that of paths.
	Recall that for $n\ge 1$ the corona $P_n\circ K_1$ is the comb $G_n$.
	
	\begin{proposition}\label{prop:corona-path}
		For every $n\ge 1$, the GPA of $P_n$ is unimodal,
		and so is the GPA of its corona $P_n\circ K_1=G_n$.
	\end{proposition}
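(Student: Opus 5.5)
The statement has two halves, and I would treat them separately: unimodality of $\psi(P_n)$, and unimodality of $\psi(G_n)$ for the comb $G_n=P_n\circ K_1$.

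For the path, the plan is to compute $\psi(P_n)$ directly.
\begin{itemize}
\item In $P_n$, for any three vertices $p_i,p_j,p_\ell$ with $i<j<\ell$, the unique geodesic from $p_i$ to $p_\ell$ is the subpath, and it contains $p_j$. This is the same argument used at the start of the proof of Theorem~\ref{thm:comb-unimodal-correct}.
\item Hence no $3$-set is in general position, so $\operatorname{gp}(P_n)=\min\{n,2\}$. By Lemma~\ref{lem:basic-small}(1) this gives
\[
\psi(P_n)=1+nx+\binom{n}{2}x^2 .
\]
\item For $n\le 2$ the sequence is $(1)$, $(1,1)$ or $(1,2,1)$, each obviously unimodal.
\item For $n\ge 3$ we have $1\le n\le \binom{n}{2}$, so the sequence is nondecreasing and hence unimodal.
\item Log-concavity also holds: $n^2-\binom{n}{2}=\frac{n(n+1)}{2}\ge 0$, the same computation as the $k=1$ case in the broom theorem. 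This is not needed, but it is consistent with the remark that $P_n=B_{n-1,0}$.
\end{itemize}

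For the comb, I would invoke Theorem~\ref{thm:comb-unimodal-correct} directly.
\begin{itemize}
\item That theorem already gives $\alpha_0=1$, $\alpha_1=2n$, $\alpha_2=\binom{2n}{2}$ and $\alpha_k=4\binom{n}{k}$ for $k\ge 3$.
\item It also shows the coefficient sequence is log-concave with all coefficients positive up to the degree, so there are no internal zeros.
\item Log-concavity with no internal zeros implies unimodality. Alternatively, one can simply cite \cite[Theorem~4.1]{0}, recalled above, which states that $\psi(G_n)$ is unimodal.
\item For completeness I would check the small cases $n=1,2$ by hand. For $n=1$, $G_1=K_2$ and $\psi=(1+x)^2$. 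For $n=2$, $G_2=P_4$ and $\psi=1+4x+6x^2$.
\end{itemize}
I would also point out a small consistency issue. For $n\le 2$, the formula $\alpha_k=4\binom{n}{k}$ gives zero for $k\ge 3$, which agrees with the direct computation. So no degenerate internal-zero issue arises, and the degree is $n$ once $n\ge 2$.

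Finally, I would state the conclusion explicitly in the form Problem~\ref{problem 1} asks for. For $G=P_n$, $\psi(G)$ is unimodal and so is $\psi(G\circ K_1)$, so the implication holds for all paths.

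There is essentially no hard step, since the comb case rests entirely on the previously established Theorem~\ref{thm:comb-unimodal-correct}. The only point requiring care is the justification that no three vertices of $P_n$ are in general position. That is, the degree of $\psi(P_n)$ is $2$ for $n\ge2$ and $\psi(P_n)$ has no cubic or higher terms. I would argue this directly from uniqueness of geodesics in a path, rather than by appeal to a formula from \cite{0}.
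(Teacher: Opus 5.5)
Your proposal is correct and follows essentially the same route as the paper. The path polynomial $\psi(P_n)=1+nx+\binom{n}{2}x^2$ is unimodal, and the comb case reduces to already-established unimodality of $\psi(G_n)$. The paper cites \cite[Theorem~4.1]{0}, while you lean on Theorem~\ref{thm:comb-unimodal-correct} (log-concave with no internal zeros); both are valid. Your direct derivation of $\psi(P_n)$, the check $n\le\binom{n}{2}$ for $n\ge 3$, and your handling of $n=1,2$ are slightly more careful than the paper's one-line ``positive coefficients, hence unimodal,'' which on its own would not rule out a valley.
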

	
	\begin{proof}
		As observed earlier, for a path $P_n$ with $n\ge 2$ we have
		\cite[Proposition~2.1(ii)]{0}
		\[
		\psi(P_n) = 1 + nx + \binom{n}{2}x^2,
		\]
		which is a quadratic polynomial with positive coefficients and therefore
		unimodal. On the other hand, $P_n\circ K_1$ is exactly the comb $G_n$ obtained by
		attaching a pendent vertex to each vertex of $P_n$. By \cite[Theorem 4.1]{0}, we know
		that for every $n\ge 1$, $\psi(G_n)$ is unimodal. Hence the unimodality of
		$\psi(P_n)$ is preserved under taking the corona with $K_1$.
	\end{proof}
	
	For instance, $P_4$ has GPA $\psi(P_4) = 1 + 4x + 6x^2.$ The comb $G_4 = P_4\circ K_1$ (a path of length $4$ with a leaf attached to each vertex) has
		\[
		\psi(G_4) = 4x^4 + 16x^3 + 28x^2 + 8x + 1,
		\]
		 whose coefficients		$(1,8,28,16,4)$ are unimodal.

	The two results above show that for certain natural classes of graphs $G$
	(edgeless graphs and paths) the unimodality of $\psi(G)$ is preserved by the
	corona operation $G\circ K_1$. In Theorem~\ref{thm:corona-edgeless}, we even
	obtain log-concavity. 
	\medskip
	
	The general implication in Problem~\ref{problem 1}, however, is not true in full generality. Indeed, consider the connected complete multipartite graph $G=K_{5,3,3,3,3,2}$. For this graph one obtains $\psi(G)=1+19x+171x^2+620x^3+1382x^4+1648x^5+810x^6$, whose coefficient sequence $1,19,171,620,1382,1648,810$ increases up to $1648$ and then decreases, so $\psi(G)$ is unimodal. On the other hand, if $\gamma_k=[x^k]\psi(G\circ K_1)$, then the corona coefficients satisfy $\gamma_6=135431$, $\gamma_7=118443$, and $\gamma_8=123453$. Hence $\gamma_6>\gamma_7<\gamma_8$, which gives a strict descent followed by a strict ascent. This is impossible for a unimodal sequence, and therefore $\psi(G\circ K_1)$ is not unimodal. Thus, although the corona operation preserves unimodality for paths, edgeless graphs and some other special families considered above, it does not preserve unimodality for all connected graphs.
	
	\section{Log-concavity for general polynomial of the corona $G\circ K_1$}\label{section 7}
	The unimodal property in Problem \ref{problem 1} cannot be replaced by log-concave property, that is, if $\psi(G)$ is log-concave, how about log-concavity of  $\psi(G\circ K_1)$. 
	
	Write $H=G\circ K_1$, and for each $v\in V(G)$ denote its leaf in $H$ by $v'$.
	Distances in $H$ satisfy
	\begin{align*}
		d_H(v,w) &= d_G(v,w),d_H(v',v) = 1,d_H(v',w) = d_G(v,w)+1,d_H(v',w') = d_G(v,w)+2.
	\end{align*}
	Two simple but crucial observations are: If a general position set $S\subseteq V(H)$ contains $v$ and $v'$ and
		at least one more vertex $x$, then $\{v',v,x\}$ is not in general position. The unique shortest $v',x$–path goes through $v$, so $d_H(v',x)=d_H(v',v)+d_H(v,x),$ and $v$ lies on a geodesic between $v'$ and $x$.  
		Hence for any general position set $S$ of size $\ge 3$, we have $  |\{v,v'\}\cap S|\le 1$ for all $v\in V(G).$ If $u,v,w\in V(G)$ and $v$ lies on some shortest $u,w$–path in $G$, then in $H$ the triple $\{u',v,w'\}$ is not in general position. And, we get
		\[
		d_H(u',w') = d_G(u,w)+2 = (d_G(u,v)+1)+(d_G(v,w)+1) = d_H(u',v)+d_H(v,w').
		\]
		So, $v$ lies on a geodesic between $u'$ and $w'$ in $H$. These facts show that general position sets in $H$ are tightly controlled by geodesics in $G$, but there is no simple, uniform transformation of $\psi(G)$ into $\psi(H)$, and it is not a priori clear how log–concavity 	behaves under the corona.
	
	 Let $P_n$ be the path of order $n$, and let $G_n=P_n\circ K_1$ be the comb.  For path $P_{n}$, the polynomial  $\psi(P_n)=1+nx+\binom{n}{2}x^2,$ is log–concave.  For the comb $G_n=P_n\circ K_1$, and by Theorem \ref{thm:comb-unimodal-correct}, $\psi(G_n)$ is log-concave.

	Similarly, if $G\cong \overline{K}_{n}$ is edgeless on $n$ vertices, then   $\psi(G\circ K_1)$  is again log–concave.  Thus, for these families log–concavity is	preserved. These examples might tempt one to conjecture that log–concavity of $\psi(G)$	implies log–concavity of $\psi(G\circ K_1)$ in general.  However, this is
	false. 
	
	Let $G=C_6$ be the cycle of length $6$. Then, we have
		\[
		\psi(C_6)=1+6x+15x^2+2x^3,
		\]
		and its coefficient sequence $(1,6,15,2)$ is log–concave.  Now, consider the corona $H=C_6\circ K_1$, and its GPA is
		\[
		\psi(H)
		= 1 + 12x + 66x^2 + 88x^3 + 39x^4 + 6x^5 + x^6.
		\]
		Clearly, $6^{2}\ngeq 39\cdot 1$, and the log-concave property ceases for $C_6\circ K_1.$

	Thus any general theorem about log–concavity of $\psi(G\circ K_1)$ must impose
	additional structural conditions on $G$. Corona graphs remain a fertile source
	of both positive results (like  combs) and counterexamples in the study of
	algebraic properties of GPAs. However, if log-concavity is preserved by $\psi(G\circ K_1)$, then  with positive coefficients and without internal zeros, then the unimodal property of $\psi(G\circ K_1)$ will follows.

\section{Conclusion and Future Work}\label{section 8}

The GPA $\psi(G)$ provides a powerful algebraic
framework for studying general position sets in graphs. In this work we
extended the foundational results of \cite{0} by examining new graph classes,
operations, and extremal behaviors related to unimodality and log-concavity. Our investigation reveals a striking dichotomy: while $\psi(G)$ displays clean
unimodal or log-concave behavior for highly structured graphs—such as
edgeless graphs, paths, combs, and Kneser graphs $K(n,2)$—it also exhibits
strong irregularity in broader classes, including brooms, multipartite
structures, and several natural corona constructions. This reflects analogous occurrences noted in independence, domination, and clique polynomials (see, \cite{1,2,3,5,6,8,11,12,13,14,19}).

Multiple intriguing study avenues arise:
\begin{enumerate}
	
	\item 	For no nontrivial family is it currently known whether $\psi(G)$ is always
	real-rooted. Even partial real-rootedness results for chordal graphs, bipartite
	graphs, or claw-free graphs (in analogy with \cite{8,12}) would be significant.
	
	\item	While some multipartite constructions behave regularly (as in \cite{25}),
	balanced multipartite graphs exhibit a delicate interplay between the
	multipartite geometry and shortest-path constraints. A full characterization
	of unimodality and log-concavity in these families is still open.
	
	\item 	The visibility and monophonic variants studied in \cite{9,24} suggest natural
	analogues of the GPA. The relationships among these
	polynomials remain largely unexplored.
	\item The zeros and their bound in real or complex field remains an open challenge.
\end{enumerate}

The GPA integrates geometric, algebraic, and combinatorial elements of graph theory.  The numerous parallels with classical
graph polynomials suggest a rich theory ahead, with many unresolved questions and promising avenues for future exploration.

	\bigskip
	\section*{Data Availability}
	There is no data associated with this article.
	\section*{Conflict of interest}
	The authors declare that they have no competing interests.
	\section*{Acknowledgement}
	The article is motivated by the talk of Prof. Sandi Klavžar, University of Ljubljana, Slovenia, at the ADMA (Academy of Discrete and Applied Mathematics) colloquium lecture series ($29^{\text{th}}$ November 2025).
	
	\medskip\noindent The authors are sincerely grateful to Václav Rozhoň and Robert Šamal (Charles University, Prague) for communicating the counterexample $\psi(K_{5,3,3,3,3,2}\circ K_1)$ to Problem \ref{problem 1}. Their finding was discovered through computational experiments using the AI-assisted mathematical platform bolzano.app.
	\section*{Funding}
	The authors did not receive support from any organization for the submitted work.

\end{document}